\newcommand\norm[1]{\left\lVert#1\right\rVert}
\newcommand\scp[1]{\left\langle#1\right\rangle}
\newcommand\restr[2]{{
		\left.\kern-\nulldelimiterspace 
		#1 
		\vphantom{\big|} 
		\right|_{#2} 
}}
\newcommand{\upperRomannumeral}[1]{\uppercase\expandafter{\romannumeral#1}}
\newcommand{\lowerRomannumeral}[1]{\lowercase\expandafter{\romannumeral#1}}
\theoremstyle{plain}
\newtheorem{theorem}{Theorem}
\newtheorem{lemma}[theorem]{Lemma}
\newtheorem{corollary}[theorem]{Corollary}
\newtheorem{rem}{Remark}
\theoremstyle{definition}
\newtheorem{example}{Example}
\def\ps@pprintTitle{%
	\let\@oddhead\@empty
	\let\@evenhead\@empty
	\def\@oddfoot{\footnotesize\itshape
		\ifx\@empty\@empty
		\else\@journal\fi\hfill\today}%
	\let\@evenfoot\@oddfoot	
}
\begin{document}
	\begin{frontmatter}
		
		\title{Smoothing effect and Derivative formulas for Ornstein--Uhlenbeck processes driven by subordinated cylindrical Brownian noises}
		
		\author{Alessandro Bondi\corref{mycorrespondingauthor}}
		
		\cortext[mycorrespondingauthor]{Classe di Scienze, Scuola Normale Superiore di Pisa, $56126$ Pisa, Italy. Email: {\tt alessandro.bondi@sns.it}} 
		
		\begin{abstract}            
			We investigate the concept of cylindrical Wiener process subordinated to a strictly $\alpha$--stable Lévy process, with $\alpha\in\left(0,1\right)$, in an infinite dimensional, separable Hilbert space, and consider the related stochastic convolution. We then introduce the corresponding Ornstein--Uhlenbeck process, focusing on the regularizing properties of the Markov transition semigroup defined by it. In particular, we provide an explicit, original formula --which is not of Bismut--Elworthy--Li's type-- for the Gateaux derivatives of the functions generated by the operators of the semigroup, as well as an upper bound for the norm of their gradients. In the case $\alpha\in\left(\frac{1}{2},1\right)$, this estimate represents the starting point for studying the Kolmogorov equation in its mild formulation.      
		\end{abstract}
	\begin{keyword}
		Subordinated cylindrical Wiener process \sep%
		Isotropic $\alpha$--stable processes\sep%
		 Markov transition semigroup\sep%
		Derivative formulas \sep%
		Gradient estimates.
	\end{keyword}
		
	\end{frontmatter}
	
	\section{Introduction}
	The aim of the paper is to analyze the Ornstein--Uhlenbeck processes $Z^x,\,x\in H$, being $H$ an infinite dimensional, separable Hilbert space. They are defined as the $H$--valued, mild solutions of the linear stochastic differential equations
	\[
	dZ^x_t=AZ_t^xdt+\sqrt{Q}\,dW_{L_t},\quad Z^x_0=x\in H,
	\]
	where $A\colon\mathcal{D}\left(A\right)\subset H\to H$ is a linear, selfadjoint, 
	negative definite, unbounded operator, and $Q\colon H\to H$ is a linear, bounded, nonnegative definite operator. By construction, $A$ and $Q$  share a common CONS  of eigenvectors for $H$: it is denoted by $\left(e_n\right)_n$. The main novelty of our work consists in the structure of the noise $W_L$. Intuitively speaking, it can be thought of as
	\[
	W_{L_t}=\sum_{n=1}^{\infty}\beta^n_{L_t}e_n,\quad t\ge0,
	\] 	
	where $\left(\beta^n\right)_n$ is a sequence of independent Brownian motions and $L=\left(L_t\right)_t$ is an independent, strictly $\alpha$--stable subordinator representing the random time change, for $\alpha\in\left(0,1\right)$. Therefore $W_L$ is nothing else than a subordinated cylindrical Wiener process, even if, of course, the convergence of the series needs to be formally investigated.
	
	In literature the canonical case is the Gaussian one, which involves a cylindrical Wiener process $W_t=\sum_{n=1}^{\infty}\beta^n_{t}e_n,\, t\ge0.$ There is a well--established theory concerning this setting, and we may refer to the book \cite{DP2} for an extensive collection of results on the subject. Another important framework is the one proposed by \cite{PZ}, where the authors deal with a cylindrical, $\alpha$--stable Lévy process $Z_t=\sum_{n=1}^{\infty}\zeta^n_{t}e_n,\, t\ge0$: here $\left(\zeta^n\right)_n$ are independent, real--valued, symmetric $\alpha$--stable Lévy processes, for $\alpha\in\left(0,2\right)$. Despite the interesting generalization offered by this approach, the structure of the noise could be questionable in some applications, especially in physics. In fact, fixing $t>0$ and $N\in\mathbb{N}$, the corresponding Galerkin projection of $Z_t$ has characteristic function
	\[
	\mathbb{E}\left[e^{i\left\langle h,\sum_{n=1}^{N}\zeta^n_te_n\right\rangle}\right]=e^{-t\gamma^\alpha \sum_{n=1}^N\left|\left\langle h,e_n\right\rangle\right|^\alpha},\quad h\in H,
	\]
	for some constant $\gamma>0$. Therefore with respect to the Brownian case we lose the isotropy, that is, the rotational stability of the noise, which is a property as desirable as realistic for a random perturbation. 
	
	Motivated by this argument, it is worth studying the results contained in the aforementioned works also for the subordinated process $W_L$, since its Galerkin projections are $2\alpha$--stable, isotropic Lévy processes, as we shall discuss in Section \ref{sec1}. With this purpose in mind, the present paper just focuses on the linear case, i.e., the Ornstein--Ulenbeck (henceforth abbreviated as \emph{OU}) one. A number of complications arises from the approach that we suggest, the most evident being the lack of independence of the processes $\left(\beta^n_L\right)_n$, which in general makes the techniques used in the other cases unfeasible. Nevertheless, the structure of the noise allows to construct the objects of our interest and to carry out our arguments  with the intuition that, conditioning on the $\sigma$--algebra generated by the subordinator $L$, we are dealing with time--shifted Brownian motions.
	
	The paper is structured as follows. In Section \ref{sec1} we carefully describe the theoretical framework of our analysis and suggest a natural procedure --essentially relying on \emph{Markov's inequality}-- to construct both the subordinated cylindrical Wiener process $W_L$, or, more precisely, $\sqrt{Q}W_L$, which in general takes values in a Hilbert space bigger than $H$, and the stochastic convolution $\tilde{Z}_{A,Q}$, which is a $H$--valued random process instead. \\In Section \ref{sec2} we are concerned with the smoothing effect of the Markov transition semigroup $R=\left(R_t\right)_{t\ge0}$ associated with $\left(Z^x\right)_{x\in H}$, defined by 
	\[
	R_t\phi\left(x\right)\coloneqq\mathbb{E}\left[\phi\left(Z^x_t\right)\right],\quad x\in H,\, \phi\in \mathcal{B}_b\left(H\right),\,t\ge0.
	\]
	We first study the finite--dimensional case, starting with a deterministic time change (see Theorem \ref{determ}) and subsequently recovering the random time shift in Theorem \ref{detgrad}. This way to proceed is customary while working with subordinated Brownian motions (see, e.g., \cite{KU,Z}). Taking advantage of the linear structure of our model, we are able to get a derivation formula for $R_t\phi$ (see Equation \eqref{no_bel}) with a density argument, shunning an application of the Bismut--Elworthy--Li's type formula provided by \cite{Z}. This is a remarkable fact, also because it is consistent with the Gaussian framework, where it is preferable to use the \emph{Bismut--Elworthy--Li's formula} only in the nonlinear case. Then in Theorem \ref{main} we pass to the general, infinite--dimensional setting under suitable assumptions. A subtle difference between the finite-- and infinite--dimensional cases is that in the former we get an expression for the Gateaux derivative of $R_t\phi$ for every $\phi\in \mathcal{B}_b\left(H\right)$, whereas in the latter such a formula (see Equation \eqref{no_BEL}) holds true only for $\phi\in C_b\left(H\right)$. In addition, in Corollary \ref{corol} we provide a gradient estimate that, for $\alpha\in\left(\frac{1}{2},1\right)$, represents the starting point for the analysis of the Kolmogorov equation in its mild form with fixed--point arguments, analysis which will be the topic of a future research.\\
	Each of the previous two sections is closed by an example which studies a concrete framework, namely $H=L^2_0\left(\mathbb{T}^d\right)$, with $\mathbb{T}^d=\mathbb{R}^d/\mathbb{Z}^d$ being the $d$--dimensional torus. Herein we discuss the hypotheses required by the several theorems of the paper, with explicit computations that offer a parallel with the corresponding, well--known results of the Gaussian setting.  
	\section{Subordinated Cylindrical Wiener Process and Stochastic Convolution}\label{sec1}
	Let $H$ be a separable Hilbert space and $\left(e_n\right)_n$ be a complete orthonormal system. We consider a complete probability space $\left(\Omega,\mathcal{F},\mathbb{P}\right)$ and introduce a sequence of independent Brownian motions $\left(\beta^n\right)_n$ on it. Let $L=\left(L_t\right)_t$ be a strictly $\alpha$--stable subordinator, i.e., an increasing L\'evy process where the distribution of $L_1\sim \mu$ is characterized by
	\begin{equation}\label{forg}
	\hat{\mu}\left(u\right)=\exp\left\{-\bar{c}\left|u\right|^\alpha\left(1-i\tan\frac{\pi\alpha}{2}\,\text{sign}\left(u\right)\right)\right\},\quad u\in\mathbb{R},
	\end{equation}
	with $\bar{c}>0, \,\alpha\in\left(0,1\right)$. The Laplace transform of $\mu$ is given by 
	\begin{equation}\label{1}
	L_\mu\left(u\right)=\mathbb{E}\left[e^{-uL_1}\right]=e^{-c'u^\alpha},\quad u\ge0,
	\end{equation}
	where $c'$ is a constant depending on $\bar{c}$ (for an expression of $c'$ we refer to \cite[Example $24.12$]{Sato}, but it is of no use in our work). Let us introduce the subordinated Brownian motions  $\left(\beta^n_{L_t}\right)_t, n\in\mathbb{N}$: assuming $L$ to be independent from  $\left(\beta^n\right)_n$, \cite[Theorem $30.1$]{Sato} implies that $\left(\beta^n_L\right)_n$ are real--valued L\'evy processes.
	
	 Denoting by $\mathcal{N}$\,the family of $\mathcal{F}$--negligible sets, we introduce the augmented $\sigma$--algebra  $\mathcal{F}^L\coloneqq\sigma\left(\mathcal{F}^L_0\cup\mathcal{N}\right)$, where $\mathcal{F}^L_0$ is the natural $\sigma$--algebra generated by the subordinator. Analogously, we consider the augmented $\sigma$--algebras $\mathcal{F}^{\beta^n}$ generated by the Brownian motions. Thanks to the hypotheses of independence that we have assumed on the processes, we have that $\mathcal{F}^L,\,\left(\mathcal{F}^{\beta^n}\right)_n$ are mutually independent. In our context, it is natural to deal with different filtrations. Specifically, for every $n\in\mathbb{N}$ let $\mathbb{F}^n=\left(\mathcal{F}^{\beta^n}_t\right)_t$ be the minimal augmented filtration generated by $\beta^n$, that is, $\mathcal{F}^{\beta^n}_t\coloneqq\sigma\left(\left({\mathcal{F}_0^{\beta^n}}\right)_t\cup\mathcal{N}\right)$ for every $t\ge0$, where $\left(\left({\mathcal{F}_0^{\beta^n}}\right)_t\right)_t$ is the natural filtration of the process. According to \cite[Theorem \upperRomannumeral{1}$.31$]{Protter}, $\mathbb{F}^n$ satisfies the usual hypotheses.
	 Then we construct a complete filtration associated with the subordinated Brownian motions. It is denoted  by $\mathbb{F}_L=\left(\mathcal{F}_t\right)_t$, where we define
	 \[
	 	\mathcal{F}_t\coloneqq\sigma\left(\bigcup_{n\in\mathbb{N}}\mathcal{F}^{\beta_L^n}_{t}\right),\quad t\ge0,
	 \]
	 with $\mathbb{F}^n_L=\left(\mathcal{F}^{\beta_L^n}_t\right)_t$ being the minimal augmented filtration associated with $\beta^n_L$. 
	 \begin{rem}
	 	In the finite dimensional case, we denote by $W_L^N=\left(W^N_{L_t}\right)_t$ the subordinated, $\mathbb{R}^N$--valued Brownian motion, meaning that
	 	\[
	 		W^N_{L_t}=\left[\begin{matrix}
	 		\beta^1_{L_t}&\cdots&\beta^N_{L_t}
	 		\end{matrix}\right]^T,\quad t\ge0.
	 		\]
	 	By \cite[Theorem $30.1$]{Sato}, $W^N_L$ is an $\mathbb{R}^N$--valued Lévy process, and it is easy to verify that its minimal augmented filtration $\left(\mathcal{F}^{W_L^N}_t\right)_t$ coincides with $\mathbb{F}_L$. This fact shows that the construction that we have carried out for $\mathbb{F}_L$ is natural.
	 	
	 	Using the notation we have just introduced, in the general case the $\sigma$--algebras constituting $\mathbb{F}_L$ can be expressed as follows: $$\mathcal{F}_t=\sigma\left(\bigcup_{N\in\mathbb{N}}\mathcal{F}^{W_L^N}_t\right),\quad t\ge0.$$
	 \end{rem}
 	\subsection{Subordinated Cylindrical Wiener Process}
	The aim of this section is to give a rigorous meaning to the formal notation $W_{L_t}=\sum_{n=1}^\infty \beta^n_{L_t} e_n, t>0$. 
	
	First, fix $h\in H,\, t>0$ and notice that the series $\sum_{n=1}^{\infty}\beta^n_{L_t}\left\langle h,e_n\right\rangle$ converges in distribution. Indeed, even if the random variables $\left(\beta^n_{L_t}\right)_{n\in\mathbb{N}}$ are not independent due to the presence of the subordinator, we can still exploit the mutual independence of the $\sigma$--algebras $\left(\mathcal{F}^{\beta^n}\right)_n$ by conditioning with respect to $\mathcal{F}^L$, which in turn is independent from the previous ones.  In order to do so, we use the law of total expectation together with \eqref{1} to get, for every $u\in\mathbb{R}$,
	\begin{align}\label{eq1}
	\mathbb{E}\left[\exp\left\{iu\sum_{n=1}^N\beta^n_{L_t}\left\langle h,e_n\right\rangle\right\}\right]
	&\!=\mathbb{E}\left[\mathbb{E}\left[\restr{\exp\left\{iu\!\sum_{n=1}^N\beta^n_{r}\left\langle h,e_n\right\rangle\right\}}{r=L_t}\Bigg|\mathcal{F}^L\right]\right]	\notag\\
	&=
	\mathbb{E}\left[\restr{\mathbb{E}\left[\exp\left\{iu\!\sum_{n=1}^N\beta^n_{r}\left\langle h,e_n\right\rangle\right\}
	\right]}{r=L_t}\right]\!\!
	=\mathbb{E}\left[\prod_{n=1}^{N}\exp\left\{-\frac{1}{2}L_t\left|u\right|^2\left|\left\langle h,e_n\right\rangle\right|^2\right\}\right] \notag\\
	&=\exp\left\{-tc'\frac{1}{2^\alpha}\left|u\right|^{2\alpha}\left(\sum_{n=1}^N\left|\left\langle h,e_n\right\rangle\right|^2\right)^\alpha\right\}\underset{N\to\infty}{\longrightarrow}
	\exp\left\{-t\frac{c'}{2^\alpha}\norm{h}^{2\alpha}_H\left|u\right|^{2\alpha}\right\}.
	\end{align}
	Hence applying L\'evy's continuity theorem we see that the series $\sum_{n=1}^{\infty}\beta^n_{L_t}\left\langle h,e_n\right\rangle$ converges in distribution to a symmetric, $2\alpha$--stable random variable. Moreover, for every $n\in\mathbb{N}$, choosing $h=e_n$ and $N>n$ the computations in \eqref{eq1} provide the distribution of the Lévy process $\beta^n_L$, namely
	\begin{equation}\label{eq0}
	\mathbb{E}\left[e^{iu\beta^n_{L_t}}\right]=\exp\left\{-t\frac{c'}{2^\alpha}\left|u\right|^{2\alpha}\right\},\quad u\in\mathbb{R}, \,\text{ for any } t>0.
	\end{equation}
 The process $W_L=\left(W_{L_t}\right)_t$ is a \emph{subordinated cylindrical Wiener process}, but we might also call it   \emph{cylindrical, $2\alpha$--stable isotropic process}. In fact, for every $N\in\mathbb{N}$ and $t>0$, if we denote by $\pi_N$ the projection onto the first $N$ Fourier components and by $H_N$  its range, an argument analogous to the one in \eqref{eq1} yields: 
	\begin{equation*}
	\mathbb{E}\left[\exp\left\{i\left\langle z,\sum_{n=1}^N\beta^n_{L_t}e_n\right\rangle\right\}\right]
	=\exp\left\{-t\frac{c'}{2^\alpha}\left(\sum_{n=1}^N\left|\left\langle z,e_n\right\rangle\right|^2\right)^\alpha\right\},\quad z\in{H}.
	\end{equation*}
	Hence canonically identifying $H_N$ with $\mathbb{R}^N$, the  Galerkin projection $\left(\sum_{n=1}^N \beta^n_{L_t}e_n\right)_t$ can be read as an {$\mathbb{R}^N$--valued}, $2\alpha$--stable, isotropic L\'evy process. 
	
	Secondly, we consider a linear, bounded, nonnegative definite operator $Q:H\to H$ such that $e_n$ is one of its eigenvectors corresponding to the eigenvalue $\sigma_n^2\ge0$, $n\in\mathbb{N}$. We study the convergence in probability --on an appropriate space-- of the series: 
	\[
	\sqrt{Q}W_{L_t}=\sum_{n=1}^\infty\sigma_n\beta^n_{L_t}e_n, \quad t>0.
	\]
	 Let us introduce a bounded sequence $\left(\rho_n\right)_n$ of strictly positive numbers such that $\sum_{n=1}^{\infty}\rho_n^{2r}\sigma_n^{2r}<\infty$ for some $r\in\left(0,\alpha\right)$, and consider the corresponding Hilbert space $\left(V,\left\langle\cdot,\cdot\right\rangle_V\right)$, where
	\begin{equation}\label{Gelfand}
	V\coloneqq\left\{h\in H : \sum_{n=1}^{\infty}\rho_n^{-2}\left|\left\langle h,e_n\right\rangle\right|^2<\infty\right\}\quad\text{and}\quad \left\langle v,w\right\rangle_V\coloneqq\sum_{n=1}^{\infty}\rho_n^{-2}\left\langle v,e_n\right\rangle\left\langle w,e_n\right\rangle,\quad v,w\in V.
	\end{equation}
	Evidently $V\subset H$ with dense and continuous embedding, therefore using the concept of \emph{Gelfand triple} we can think a generic $h\in H$ as an object in $V'$, namely
	\[
	\left\langle h,v\right\rangle_{V',V}=\sum_{n=1}^\infty\left\langle h,e_n\right\rangle \left\langle v,e_n\right\rangle,\quad v\in V.
	\]
	Noticing that $\scp{h,\cdot}_{V',V}=\scp{\tilde{v},\cdot}_{V}$, where $\tilde{v}\coloneqq\sum_{n=1}^\infty \rho_n^{2}\scp{h,e_n}e_n\in V$, we can apply \emph{Riesz representation theorem} to get $\norm{h}^2_{V'}=\sum_{n=1}^\infty\rho_n^{2}\left|\scp{h,e_n}\right|^2.$ Now we fix $t>0$ and show that  $\left(\sum_{n=1}^N\sigma_n\beta^n_{L_t}e_n\right)_N\subset V'$ is a Cauchy sequence in probability. Indeed, applying \emph{Markov's inequality }and using the fact that the function $\phi\left(x\right)=x^r,\,x\ge0$, is subadditive and strictly increasing as $0<r<\alpha<1$,  for every $\epsilon>0$ we get:
	\begin{align*}
	\mathbb{P}&\left(\norm{\sum_{n=p}^q\sigma_n\beta^n_{L_t}e_n}_{V'}>\epsilon\right)
	\le\mathbb{P}\left(\phi\left(\norm{\sum_{n=p}^q\sigma_n\beta^n_{L_t}e_n}_{V'}^2\right)>\phi\left(\epsilon^2\right)\right)
	\le\frac{1}{\epsilon^{2r}}\mathbb{E}\left[\phi\left(\norm{\sum_{n=p}^q\sigma_n\beta^n_{L_t}e_n}_{V'}^2\right)\right]\\
	&=\!\epsilon^{-2r}\mathbb{E}\left[\phi\left(\sum_{n=p}^q\sigma_n^2\rho_n^2\left|\beta^n_{L_t}\right|^2\right)\right]\!
	\le\epsilon^{-2r}\sum_{n=p}^q\mathbb{E}\left[\left(\sigma_n^{2r}\rho_n^{2r}\left|\beta^n_{L_t}\right|^{2r}\right)\right]\!
	=\!\epsilon^{-2r}\mathbb{E}\left[\left|\beta^1_{L_t}\right|^{2r}\right]\left(\sum_{n=p}^q\sigma_n^{2r}\rho_n^{2r}\right)\!\underset{p,q\to\infty}{\longrightarrow}0,
	\end{align*}
	where we use that by construction $\beta^n_{L_t}\sim \beta^1_{L_t}, n\in\mathbb{N},$ and that by \eqref{eq0} they all generate a $2\alpha$--stable distribution, which has finite moment of order $2r$ (see also Remark \ref{r1}). By completeness, we can conclude the existence of an a.s. unique, $V'$--valued random variable $\sqrt{Q}W_{L_t}$ such that 
	\[
	\sqrt{Q}W_{L_t}=\mathbb{P}-\lim_{N\to\infty} \sum_{n=1}^N\sigma_n\beta^n_{L_t}e_n \quad \text{in }V'.
	\] 
	Actually such a convergence in probability is true also in the $\mathbb{P}-$a.s. sense, as the following, easy and general lemma proves.
	\begin{lemma}\label{itsa.s.}
		Let $\left(X^n\right)_n$ be a sequence of real--valued random variables defined on a probability space $\left(\Omega,\mathcal{F},\mathbb{P}\right)$ and $H$ be a separable Hilbert space admitting $\left(e_n\right)_n$ as CONS. If $\sum_{n=1}^{\infty} X^ne_n$ converges in probability, then it converges $\mathbb{P}-$a.s.
	\end{lemma}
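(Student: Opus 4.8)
The plan is to reduce the $H$-valued convergence to a statement about a monotone scalar sequence, for which convergence in probability and almost sure convergence coincide. Write $S_N\coloneqq\sum_{n=1}^N X^n e_n$; by hypothesis there is an $H$-valued random variable $S$ with $S_N\to S$ in probability. Since $\left(e_n\right)_n$ is a CONS, the partial sums satisfy $\norm{S_N}^2=\sum_{n=1}^N\left|X^n\right|^2\eqqcolon T_N$, and the key observation is that, for each fixed $\omega$, the sequence $\left(T_N\right)_N$ is nondecreasing in $N$. This is precisely where the orthogonality of the basis is exploited: it turns the norm of the partial sums into a monotone object, so that controlling a subsequence will automatically control the whole sequence.

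First I would extract a subsequence. Convergence in probability yields a subsequence $\left(S_{N_k}\right)_k$ with $S_{N_k}\to S$ $\mathbb{P}$--a.s.; by continuity of the norm, $T_{N_k}=\norm{S_{N_k}}^2\to\norm{S}^2<\infty$ $\mathbb{P}$--a.s. Next I would upgrade this to the full sequence using monotonicity: whenever $N_k\le N\le N_{k+1}$ one has $T_{N_k}\le T_N\le T_{N_{k+1}}$, and since both bounds converge to $\norm{S}^2$ as $k\to\infty$, a squeeze gives $T_N\to\norm{S}^2$ $\mathbb{P}$--a.s. In particular $\sum_{n=1}^\infty\left|X^n\right|^2=\norm{S}^2<\infty$ almost surely.

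Finally, on the almost sure event where $\sum_{n=1}^\infty\left|X^n\right|^2<\infty$, the tail identity $\norm{S_q-S_p}^2=\sum_{n=p+1}^q\left|X^n\right|^2\to0$ as $p,q\to\infty$ shows that $\left(S_N\right)_N$ is Cauchy in the complete space $H$, hence convergent; the limit must coincide with $S$ almost surely by uniqueness of the limit in probability. I expect the monotonicity reduction to be the crux of the argument: since the $X^n$ are not assumed independent, the classical series theorems of L\'evy or It\^o--Nisio type are unavailable, and it is the orthonormality of $\left(e_n\right)_n$ that replaces them, making the passage from the subsequence to the full sequence immediate. The remaining steps --- subsequence extraction and the Cauchy completion --- are routine.
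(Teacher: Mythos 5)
Your proof is correct, and it shares the paper's opening move --- extracting a subsequence of partial sums that converges $\mathbb{P}$--a.s.\ to the probability limit $S$ --- but the two arguments diverge in how they upgrade from the subsequence to the full sequence. You go through the squared norms: $\norm{S_N}^2=\sum_{n\le N}\left|X^n\right|^2$ is monotone, converges to $\norm{S}^2$ along the subsequence, hence (by the squeeze) along the whole sequence, which gives $\sum_n\left|X^n\right|^2<\infty$ a.s.\ and then convergence of $\left(S_N\right)_N$ via the Cauchy criterion on the tails. The paper instead uses the subsequence only to identify the Fourier coefficients of the limit: by continuity of the inner product, $\left\langle S,e_{\bar n}\right\rangle=X^{\bar n}$ a.s.\ for every $\bar n$, after which the a.s.\ convergence of $\sum_n X^ne_n$ to $S$ is nothing but the convergence of the Fourier expansion of the fixed element $S\left(\omega\right)\in H$ --- no monotonicity, squeeze, or completeness argument is needed. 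The two routes are two faces of the same fact (an orthogonal series in $H$ converges iff its coefficients are square--summable): the paper's is marginally shorter, while yours makes explicit that monotonicity of the partial-sum norms is what substitutes for the independence-based series theorems (L\'evy, It\^o--Nisio) that are unavailable here. Both arguments are complete.
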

\begin{proof}
	Let $S\coloneqq\mathbb{P}-\lim_{N\to \infty}\sum_{n=1}^NX^ne_n\colon\Omega\to H$.  Obviously 
	\begin{equation}\label{mi1}
	S\left(\omega\right)=
	\sum_{n=1}^\infty \left\langle S\left(\omega\right),e_n\right\rangle e_n
	=H-\!\!\lim_{N\to \infty}\sum_{n=1}^N \left\langle S\left(\omega\right),e_n\right\rangle e_n,\quad \omega\in\Omega.
	\end{equation}
	Convergence in measure implies a.s. convergence along a subsequence, hence we have
	\[
	S\left(\omega\right)
	=H-\!\!\lim_{k\to \infty}\sum_{n=1}^{N_k} X^n\left(\omega\right)e_n\quad \text{for }\mathbb{P}-\text{a.e. }\omega\in\Omega.
	\]
	Therefore, for $\mathbb{P}-$a.e. $\omega\in\Omega$, we see that the Fourier components of $S$ are 
	\[
	\left\langle S\left(\omega\right),e_{\bar{n}}\right\rangle		=\lim_{k\to \infty}\left\langle\sum_{n=1}^{N_k}X^n\left(\omega\right)e_n,e_{\bar{n}}\right\rangle=X^{\bar{n}}\left(\omega\right)\quad \text{for every }\bar{n}\in\mathbb{N}.
	\]
	Substituting in \eqref{mi1} we conclude
	\[
	S\left(\omega\right)
	=H-\!\!\lim_{N\to \infty}\sum_{n=1}^N X^n\left(\omega\right) e_n \quad \text{for } \mathbb{P}-\text{a.e. }\omega\in\Omega,
	\]
	as we stated.
\end{proof}
Going back to $\sqrt{Q}W_{L_t}$, since $\left(\rho_ne_n\right)_n$ is a CONS for the Hilbert space $V$, Lemma \ref{itsa.s.} allows to write
\[
	\sqrt{Q}W_{L_t}=\lim_{N\to\infty}\left\langle \sum_{n=1}^N\sigma_n\beta^n_{L_t}e_n,\cdot\right\rangle_{V',V}
	=\lim_{N\to\infty}\sum_{n=1}^N\rho_n\sigma_n\beta^n_{L_t}\left\langle \left(\rho_ne_n\right),\cdot\right\rangle_{V}\quad\mathbb{P}-\text{a.s.}
\]
It then follows that $\left\langle\sqrt{Q}W_{L_t},v\right\rangle_{V',V}=\lim_{N\to \infty}\sum_{n=1}^N\sigma_n\beta^n_{L_t}\left\langle v,e_n\right\rangle$ for every $v\in V,\,\mathbb{P}-$a.s. Combining this with \eqref{eq1}, we can see that $\left\langle\sqrt{Q}W_{L_t},v\right\rangle_{V',V}$ has a symmetric, $2\alpha$--stable distribution.
We collect the  previous results in the next theorem.
\begin{theorem}
	\begin{enumerate}
	\item Given $h\in H$ and $t>0$, the series $\sum_{n=1}^{\infty}\beta^n_{L_t}\left\langle h,e_n\right\rangle$ converges in distribution to a real--valued, symmetric, $2\alpha$--stable random variable $X_t$ whose characteristic function is
	\[
		\mathbb{E}\left[e^{iuX_t}\right]=\exp\left\{-t\frac{c'}{2^\alpha}\norm{h}^{2\alpha}_H\left|u\right|^{2\alpha}\right\},\quad u\in\mathbb{R}.
	\]
	\item	Consider a linear, bounded, nonnegative definite operator $Q:H\to H$ such that $\left(e_n\right)_n$ is a basis of its eigenvectors corresponding to the eigenvalues $\left(\sigma_n^2\right)_{n}\left(\subset \mathbb{R}_+\right)$.	Let $\left(\rho_n\right)_n$ be a bounded sequence of strictly positive weights such that $\sum_{n=1}^{\infty}\rho_n^{2r}\sigma_n^{2r}<\infty$ for some $0<r<{\alpha}$. Then the corresponding Hilbert space $\left(V,\left\langle\cdot,\cdot\right\rangle_V\right)$ defined in \eqref{Gelfand} is continuously embedded with density in $H$ and, for every $t>0$, the random variable $\sqrt{Q}W_{L_t}\colon\Omega\to V'$ is defined as
	\[
	\sqrt{Q}W_{L_t}\coloneqq\lim_{N\to\infty} \sum_{n=1}^N\sigma_n\beta^n_{L_t}e_n\quad \mathbb{P}-\text{a.s.}
	\] 
In particular, for every $v\in V$,
\[
	\sum_{n=1}^{N}\beta^n_{L_t}\left\langle\sqrt{Q}v,e_n\right\rangle \underset{N\to\infty}{\longrightarrow}\left\langle\sqrt{Q}W_{L_t},v\right\rangle_{V',V}\quad \mathbb{P}-\text{a.s.}
\]
\end{enumerate}
\end{theorem}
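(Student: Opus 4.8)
The plan is to assemble the statement from the computations and auxiliary results already established above, since each constituent fact has been verified en route; the proof is therefore a matter of organizing these pieces into the two parts.

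For the first part, I would begin from the characteristic function identity \eqref{eq1}. Conditioning on $\mathcal{F}^L$ and exploiting the conditional independence of the $\left(\beta^n\right)_n$ together with the Laplace transform \eqref{1} of the subordinator, the truncated sum $\sum_{n=1}^N\beta^n_{L_t}\scp{h,e_n}$ has characteristic function
\[
\exp\left\{-t\frac{c'}{2^\alpha}\left|u\right|^{2\alpha}\left(\sum_{n=1}^N\left|\scp{h,e_n}\right|^2\right)^\alpha\right\},
\]
which converges pointwise in $u$ to $\exp\{-t\,c'2^{-\alpha}\norm{h}_H^{2\alpha}\left|u\right|^{2\alpha}\}$ as $N\to\infty$, by Parseval's identity and continuity of $x\mapsto x^\alpha$. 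Since this limit is continuous at $u=0$, Lévy's continuity theorem gives convergence in distribution of the series to a random variable $X_t$ with exactly this characteristic function, and reading off the exponent identifies $X_t$ as symmetric and $2\alpha$--stable.

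For the second part I would proceed in three steps. First, the embedding: boundedness of $\left(\rho_n\right)_n$ yields $\norm{h}_H\le\left(\sup_n\rho_n\right)\norm{h}_V$, hence $V\hookrightarrow H$ continuously, while density follows because every finite combination $\sum_{n\le N}c_ne_n$ belongs to $V$ and such combinations are dense in $H$. Second, the existence of $\sqrt{Q}W_{L_t}$ as a limit in probability in $V'$: using the Riesz identification $\norm{h}_{V'}^2=\sum_n\rho_n^2\left|\scp{h,e_n}\right|^2$ and the Markov-inequality estimate already carried out, the tail probability $\mathbb{P}\bigl(\norm{\sum_{n=p}^q\sigma_n\beta^n_{L_t}e_n}_{V'}>\epsilon\bigr)$ is bounded by $\epsilon^{-2r}\mathbb{E}[|\beta^1_{L_t}|^{2r}]\sum_{n=p}^q\sigma_n^{2r}\rho_n^{2r}$, which vanishes as $p,q\to\infty$; completeness of $V'$ then furnishes the limit in probability. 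Third, I would upgrade this to almost sure convergence by applying Lemma \ref{itsa.s.} in the Hilbert space $V$ with CONS $\left(\rho_ne_n\right)_n$ and real coefficients $\rho_n\sigma_n\beta^n_{L_t}$, after rewriting the partial sums through the Riesz isometry.

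Finally, for the pairing assertion, fixing $v\in V$ and pairing the almost sure limit against $v$, continuity of $\scp{\cdot,v}_{V',V}$ lets me pass the limit inside, yielding $\sum_{n=1}^N\beta^n_{L_t}\scp{\sqrt{Q}v,e_n}\to\scp{\sqrt{Q}W_{L_t},v}_{V',V}$ almost surely, where I use selfadjointness of $\sqrt{Q}$ to write $\sigma_n\scp{v,e_n}=\scp{\sqrt{Q}v,e_n}$. The only genuinely delicate point is the second step: ensuring that $\mathbb{E}[|\beta^1_{L_t}|^{2r}]$ is finite and independent of $n$. This is precisely what the constraint $r<\alpha$ secures, since by \eqref{eq0} each $\beta^n_{L_t}$ shares the same $2\alpha$--stable law, which possesses absolute moments of every order strictly below $2\alpha$; the summability hypothesis $\sum_n\rho_n^{2r}\sigma_n^{2r}<\infty$ then closes the argument.
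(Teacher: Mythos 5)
Your proposal is correct and follows essentially the same route as the paper: conditioning on $\mathcal{F}^L$ plus Lévy's continuity theorem for part 1, and for part 2 the Riesz identification of $\norm{\cdot}_{V'}$, Markov's inequality with the subadditivity of $x\mapsto x^r$ and finiteness of the $2r$-th moment of the $2\alpha$--stable law, followed by Lemma \ref{itsa.s.} to upgrade convergence in probability to almost sure convergence and a pairing against $v\in V$. No gaps.
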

	\begin{rem}\label{r1}
	We can state the finiteness of the moment of order $2r$ of the random variable $\beta^1_{L_t}$ without explicitly knowing its distribution, i.e., without using \eqref{eq0}. In fact, we can proceed as follows:
	\[
	\mathbb{E}\left[\left|\beta^1_{L_t}\right|^{2r}\right]=\mathbb{E}\left[\mathbb{E}\left[\left|\beta^1_{L_t}\right|^{2r}\Bigr|\mathcal{F}_L\right]\right]=\frac{2^{r}}{\sqrt{\pi}}\Gamma\left(\frac{2r+1}{2}\right)\mathbb{E}\left[L_t^r\right]<\infty,
	\]
	since we are dealing with $0<r<\alpha.$ For the second equality we refer to \cite[Equation $\left(17\right)$]{Wink}.
\end{rem}
	\subsection{Stochastic Convolution}
	 Let $A:\mathcal{D}\left(A\right)\subset H\to H$ be a linear, selfadjoint, 
	 negative definite, unbounded operator 
	 that shares with $Q$ a common basis of eigenvectors $\left(e_n\right)_n$. We denote by $\left(-\lambda_n\right)_n$, with $0<\lambda_1\le\lambda_2 \le \dots\le \lambda_n\le \cdots$ the corresponding eigenvalues, i.e., $Ae_n=-\lambda_ne_n,\,n\in\mathbb{N}$. Recalling that $\alpha\in\left(0,1\right)$ has been fixed at the beginning of Section \ref{sec1}, it is convenient to introduce the shorthand notation $X\sim \text{stable}\left(\alpha,\beta,\gamma,\delta\right)$ to denote a random variable $X$ with characteristic function given by
	\[
	\mathbb{E}\left[e^{iuX}\right]=\exp\left\{-\gamma^\alpha\left|u\right|^\alpha\left(1-i\beta\tan\frac{\pi\alpha}{2}\text{sign}\left(u\right)\right)+i\delta u\right\},\quad u\in\mathbb{R}.
	\]
	where $\left|\beta\right|\le1,\gamma>0$ and $\delta\in\mathbb{R}$. 
	Hence by \eqref{eq0}, for every $n\in\mathbb{N}$ the L\'evy process $\beta^n_L$ has random variables distributed as 
	\[
		\beta^n_{L_t}\sim\text{stable}\left(2\alpha,0,\left(t\frac{c'}{2^\alpha}\right)^{{1}/\left({2\alpha}\right)},0\right),\quad t>0.
	\]
	 We denote by $U^n=\left(U_t^n\right)_{t\ge0}$ the OU--process
	$
	U_t^n\coloneqq\int_{0}^t e^{-\lambda_n\left(t-s\right)}\sigma_n\,d\beta^n_{L_s},\, t\ge0:
	$
	this is the unique (up to evanescence) solution of the one dimensional stochastic differential equation
	\begin{equation}\label{tea}
	dU^n_t=-\lambda_nU^n_tdt+\sigma_n\,d\beta^n_{L_t},\quad U^n_0=0.
	\end{equation}
	The processes $\left(U^n\right)_n$ are c\`adl\`ag and adapted to the filtration $\mathbb{F}_L$, and direct computations (see, e.g., \cite[Proposition $3.2$]{BB}) show that $U_t^n\sim \text{stable}(2\alpha,0, \gamma_n\left(t\right),0)$, where 
	\[
	\gamma_n\left(t\right)\coloneqq \left(\frac{c'}{2^\alpha}\right)^{{1}/\left({2\alpha}\right)}\!\!\left(\int_{0}^t\!\!e^{-2\alpha\lambda_n\left(t-s\right)}\sigma_n^{2\alpha}\,ds\right)^{{1}/\left({2\alpha}\right)}\!\!\!
	=\sigma_n\left(\frac{c'}{2^{\alpha+1}\alpha}\right)^{{1}/\left({2\alpha}\right)}\!\!\left(\frac{1-e^{-2\alpha\lambda_nt}}{\lambda_n}\right)^{{1}/\left({2\alpha}\right)}\!\!,\quad t>0,\,n\in\mathbb{N}.
	\]
	We are now in position to construct the \emph{stochastic convolution} and the corresponding OU--process.
	\begin{theorem}
		Assume that
		\begin{equation}\label{st_con}\tag{\lowerRomannumeral{1}}
		\sum_{n=1}^\infty\dfrac{\sigma_n^{2r}}{\lambda_n^{{r}/{\alpha}}}<\infty \quad \text{for some }r\in\left(0,\alpha\right).
		\end{equation}
	Then,\,for\,all\,$t\!>\!0$,\,the\,series\,$\sum_{n=1}^\infty\!U^n_te_n$ converges $\mathbb{P}-$a.s.\,to\,a random variable\,$\tilde{Z}_{A,Q}\!\left(t\right)=\!\int_{0}^{t}e^{\left(t-s\right)A}\sqrt{Q}\,dW_{L_s}$. The resulting process $\tilde{Z}_{A,Q}=\left(\tilde{Z}_{A,Q}\left(t\right)\right)_t$ is $\mathbb{F}_L$--adapted and is called \emph{stochastic convolution}. 
	
	The corresponding OU--process starting at $x\in H$, denoted by 	$Z^x=\left(Z_t^x\right)_t$ and defined by
	\[
	Z_t^x\coloneqq e^{tA}x+\int_{0}^{t}e^{\left(t-s\right)A}\sqrt{Q}\,dW_{L_s}=e^{tA}x+\tilde{Z}_{A,Q}\left(t\right),\quad t\ge 0,
	\]
	is $\mathbb{F}_L$--adapted and Markovian with homogeneity in time.
\end{theorem}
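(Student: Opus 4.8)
The plan is to handle the three assertions in turn: the $\mathbb{P}$-a.s. convergence of the series is obtained by mimicking the \emph{Markov inequality} estimate already carried out for $\sqrt{Q}W_{L_t}$, this time directly in $H$ (the smoothing by $e^{(t-r)A}$, encoded in \eqref{st_con}, is precisely what lets us land in $H$ rather than merely in $V'$), while adaptedness and the Markov property follow from the fact that $W_L$ is a Lévy process with respect to $\mathbb{F}_L$. For the convergence, fix $t>0$ and show that the partial sums $\left(\sum_{n=1}^N U^n_te_n\right)_N$ are Cauchy in probability in $H$, then upgrade to $\mathbb{P}$-a.s. convergence via Lemma \ref{itsa.s.}. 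Since $0<r<\alpha<1$ the map $\phi(x)=x^r$ is strictly increasing and subadditive on $[0,\infty)$, and by orthonormality of $(e_n)_n$ one has $\norm{\sum_{n=p}^q U^n_te_n}_H^2=\sum_{n=p}^q|U^n_t|^2$, so \emph{Markov's inequality} and subadditivity give, for every $\epsilon>0$,
\[
\mathbb{P}\left(\norm{\sum_{n=p}^q U^n_t e_n}_H>\epsilon\right)\le \epsilon^{-2r}\,\mathbb{E}\left[\left(\sum_{n=p}^q|U^n_t|^2\right)^{\!r}\right]\le \epsilon^{-2r}\sum_{n=p}^q\mathbb{E}\left[|U^n_t|^{2r}\right].
\]
Because $U^n_t\sim\text{stable}(2\alpha,0,\gamma_n(t),0)$ is symmetric $2\alpha$-stable with scale $\gamma_n(t)$, writing $U^n_t=\gamma_n(t)S$ with $S$ a normalized symmetric $2\alpha$-stable variable yields $\mathbb{E}[|U^n_t|^{2r}]=\gamma_n(t)^{2r}\,\mathbb{E}[|S|^{2r}]$, and $\mathbb{E}[|S|^{2r}]<\infty$ since $2r<2\alpha$ (cf. Remark \ref{r1}). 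Using $1-e^{-2\alpha\lambda_nt}\le1$ in the explicit formula for $\gamma_n(t)$ gives $\gamma_n(t)^{2r}\le \left(\tfrac{c'}{2^{\alpha+1}\alpha}\right)^{\!r/\alpha}\sigma_n^{2r}\lambda_n^{-r/\alpha}$, whence
\[
\sum_{n=p}^q\mathbb{E}\left[|U^n_t|^{2r}\right]\le \mathbb{E}\left[|S|^{2r}\right]\left(\frac{c'}{2^{\alpha+1}\alpha}\right)^{\!r/\alpha}\sum_{n=p}^q\frac{\sigma_n^{2r}}{\lambda_n^{r/\alpha}}\underset{p,q\to\infty}{\longrightarrow}0
\]
by \eqref{st_con}. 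This gives the Cauchy property in probability, hence convergence in probability to some $\tilde{Z}_{A,Q}(t)\in H$, and Lemma \ref{itsa.s.} promotes it to $\mathbb{P}$-a.s. convergence.

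For adaptedness, each $U^n$ is $\mathbb{F}_L$-adapted, so every partial sum $\sum_{n=1}^N U^n_te_n$ is $\mathcal{F}_t$-measurable; as $\mathbb{F}_L$ is complete and $\mathbb{P}$-a.s. limits of $\mathcal{F}_t$-measurable maps remain $\mathcal{F}_t$-measurable, $\tilde{Z}_{A,Q}(t)$ is $\mathcal{F}_t$-measurable. Adding the deterministic term $e^{tA}x$ shows $Z^x_t$ is $\mathcal{F}_t$-measurable as well, so both $\tilde{Z}_{A,Q}$ and $Z^x$ are $\mathbb{F}_L$-adapted.

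For the Markov property and time homogeneity, the starting point is the additive decomposition, valid for $0\le s\le t$,
\[
Z^x_t=e^{(t-s)A}Z^x_s+\int_s^t e^{(t-r)A}\sqrt{Q}\,dW_{L_r}=:e^{(t-s)A}Z^x_s+I_{s,t},
\]
which follows, componentwise on the one-dimensional OU processes $U^n$, from the semigroup identity $e^{-\lambda_n(t-s)}e^{-\lambda_ns}=e^{-\lambda_nt}$. Working through the Galerkin projections $W_L^N$, which are genuine $\mathbb{R}^N$-valued Lévy processes whose augmented filtration coincides with $\mathbb{F}_L$, the increment $I_{s,t}$ is a measurable functional of $\left(W_{L_{s+u}}-W_{L_s}\right)_{u\ge0}$; hence it is independent of $\mathcal{F}_s$ and, by stationarity of the increments of $W_L$, has the same law as $\tilde{Z}_{A,Q}(t-s)$. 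Writing $\mu_{t-s}$ for the law of $\tilde{Z}_{A,Q}(t-s)$ and using that $Z^x_s$ is $\mathcal{F}_s$-measurable while $I_{s,t}$ is independent of $\mathcal{F}_s$, for every $f\in\mathcal{B}_b(H)$
\[
\mathbb{E}\left[f(Z^x_t)\mid\mathcal{F}_s\right]=\int_H f\left(e^{(t-s)A}Z^x_s+z\right)\,\mu_{t-s}(dz),
\]
a Borel function of $Z^x_s$ alone; this proves the Markov property and simultaneously shows that the transition kernel depends on $s,t$ only through $t-s$, i.e. time homogeneity.

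The bulk of the work is routine given the machinery already in place: the convergence step merely transcribes the estimate for $\sqrt{Q}W_{L_t}$, with the weights $\rho_n^{2r}$ there replaced by $\lambda_n^{-r/\alpha}$ coming from the scale $\gamma_n(t)$. I expect the genuinely delicate point to be the increment analysis behind the Markov property: although the components $(\beta^n_L)_n$ are \emph{not} mutually independent because of the shared subordinator, what the argument actually needs is the independence and stationarity of the increments of the whole process $W_L$ over disjoint time intervals, which do hold since $W_L$ is $\mathbb{F}_L$-Lévy. The care required is to transfer these finite-dimensional properties of $W_L^N$ to the infinite-dimensional limit $I_{s,t}$, checking that both its independence from $\mathcal{F}_s$ and its identification in law with $\tilde{Z}_{A,Q}(t-s)$ survive the $\mathbb{P}$-a.s. limit.
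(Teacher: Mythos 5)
Your proposal is correct and follows essentially the same route as the paper: the same Markov--inequality estimate with the subadditivity of $x\mapsto x^r$ and the scale parameters $\gamma_n\left(t\right)$ reduced to $\sigma_n^{2r}\lambda_n^{-r/\alpha}$ via $1-e^{-2\alpha\lambda_n t}\le1$, followed by Lemma \ref{itsa.s.}, completeness of $\mathbb{F}_L$ for adaptedness, and the decomposition $Z^x_t=e^{\left(t-s\right)A}Z^x_s+I_{s,t}$ with independence and stationarity of the increments of $\left(\beta^n_L\right)_n$ for Markovianity and time homogeneity. Your write--up of the Markov step (explicit transition kernel, passage through the Galerkin projections $W^N_L$) is in fact more detailed than the paper's, which leaves that part as a standard argument.
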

\begin{proof}
	Fix $t>0$.
	Thanks to the preceding discussion, we know that $U^n_t\sim\gamma_n\left(t\right)X,\,n\in\mathbb{N}$, where $X$ is a random variable such that $X\sim\text{stable}\left(2\alpha,0,1,0\right)$. Then an application of Markov's inequality entails:
	\begin{multline*}
	\mathbb{P}\left(\norm{\sum_{n=p}^qU^n_te_n}_{H}\!\!\!>\epsilon\right)\!\!
	\le\epsilon^{-2r}\mathbb{E}\left[\phi\left(\norm{\sum_{n=p}^qU^n_te_n}_{H}^2\right)\right]\!\!
	=\epsilon^{-2r}\mathbb{E}\left[\phi\left(\sum_{n=p}^q\left|U^n_t\right|^2\right)\right]\!\!
	\le\epsilon^{-2r}\sum_{n=p}^q\mathbb{E}\left[\left(\left|U^n_t\right|^{2r}\right)\right]\\
	=\epsilon^{-2r}\mathbb{E}\left[\left|X\right|^{2r}\right]\left(\frac{c'}{2^{\alpha+1}\alpha}\right)^{r/\alpha}\left(\sum_{n=p}^q\frac{\sigma_n^{2r}}{\lambda_n^{r/\alpha}}\left(1-e^{-2\alpha\lambda_nt}\right)^{r/\alpha}\right)
	\le c\left(\epsilon\right)\left(\sum_{n=p}^q\frac{\sigma_n^{2r}}{\lambda_n^{r/\alpha}}\right)
	\underset{p,q\to\infty}{\longrightarrow}0,\quad \epsilon>0,
	\end{multline*} 
	with $c\left(\epsilon\right)\coloneqq \epsilon^{-2r}\mathbb{E}\left[\left|X\right|^{2r}\right]\left(\dfrac{c'}{2^{\alpha+1}\alpha}\right)^{{r}/{\alpha}}$ and $\phi\left(x\right)=x^r$, as above. Therefore the series converges in probability: 
	\[
	\tilde{Z}_{A,Q}\left(t\right)=\int_{0}^{t}e^{\left(t-s\right)A}\sqrt{Q}\,dW_{L_s}\coloneqq\mathbb{P}-\lim_{N\to \infty}\sum_{n=1}^N U^n_te_n.
	\]
	An application of Lemma \ref{itsa.s.}  shows that such convergence is true in the $\mathbb{P}-$a.s. sense, as well.
	Obviously $\tilde{Z}_{A,Q}$ is an $\mathbb{F}_L$--adapted process, since the space $\left(\Omega,\mathcal{F},\mathbb{P}\right)$ is complete by hypothesis, $\mathbb{F}_L$ is complete by construction and  the one dimensional OU--processes $U^n$ are $\mathbb{F}_L$--adapted. 
	
	Concerning the OU--processes, for every $x\in H$ we can express the random variables of $Z^x=\left(Z_t^x\right)_t$ as follows: 
	\[
	Z^x_{t+h}\overset{a.s.}{=}e^{hA}Z^x_t+\int_{t}^{t+h}e^{\left(t+h-s\right)A}\sqrt{Q}\,dW_{L_s}
	=e^{hA}Z^x_t+\sum_{n=1}^{\infty} \left(\int_{t}^{t+h} e^{-\lambda_n\left(t+h-s\right)}\sigma_n\,d\beta_{L_s}^n\right)e_n,\quad t,h\ge0.
	\]
	This immediately implies the Markovianity of the process, recalling the independence of the increments of the Lévy processes $\left(\beta^n_L\right)_n$. The time homogeneity is obtained by a standard argument relying on the stationarity of the increments of the same processes and the fact that the coefficients of the one--dimensional SDEs in \eqref{tea} are time--autonomous. The proof is then complete. 
\end{proof}
We close this section with an example which analyzes a common framework in applications (see, e.g., \cite{Flandoli}).
\begin{example}\label{example1}
	Let $\mathbb{T}^d=\mathbb{R}^d/\mathbb{Z}^d$ be the $d$--dimensional torus and denote by $e_k$ the functions
	\begin{equation*}
	e_k\left(x\right)\coloneqq\begin{cases}
	\cos\left(2\pi k\cdot x\right),&k\in\mathbb{Z}^d_+\\
	\sin\left(2\pi k\cdot x\right),&k\in\mathbb{Z}^d_-
	\end{cases},\quad x\in\mathbb{T}^d,
	\end{equation*}
	where $\mathbb{Z}^d_+\coloneqq\left\{\left(k_1>0\right)\text{ or }\left(k_1=0 \text{ and }k_j>0 \text{ for }j=2,\dots,d\right)\right\}$ and $\mathbb{Z}^d_-\coloneqq-\mathbb{Z}^d_+$. Then $\left\{e_k : k\in\mathbb{Z}_0^d\right\}$ constitute a complete orthonormal system for the Hilbert space  
	\[
	H=L^2_0\left(\mathbb{T}^d;\mathbb{R}\right)\coloneqq\left\{f\in L^2\left(\mathbb{T}^d;\mathbb{R}\right):\int_{\mathbb{T}^d} f\left(x\right)dx=0\right\},
	\]
	 where of course $\mathbb{Z}_0^d\coloneqq\mathbb{Z}^d\setminus\left\{0\right\}.$ In particular, for every $f\in H$, we have
	\[
	f=\sum_{k\in\mathbb{Z}^d_0}\hat{f}_ke_k,\quad \hat{f}_k\coloneqq\int_{\mathbb{T}^d}f\left(x\right)e_k\left(x\right)\,dx,\quad k\in\mathbb{Z}^d_0.   
	\]  
	We first introduce the Sobolev spaces 
	\[
	W_0^{\beta,2}\left(\mathbb{T}^d\right)\coloneqq\left\{f\in H : \sum_{k\in\mathbb{Z}^d_0}\left|k\right|^{2\beta}\hat{f}_k^2<\infty\right\},\quad \norm{f}_{W_0^{\beta,2}}^2\coloneqq \sum_{k\in\mathbb{Z}^d_0}\left|k\right|^{2\beta}\hat{f}_k^2,
	\]
	and then define the linear operator $A$ as follows:
	\[
	A\colon W_0^{2,2}\left(\mathbb{T}^d\right)\to H\quad \text{ such that }\quad Af=\Delta f=-\left(2\pi\right)^2\sum_{k\in\mathbb{Z}^d_0}\left|k\right|^2\hat{f}_ke_k.	
	\]
	In particular, the eigenvalues of $A$ corresponding to $e_k$ are $-\lambda_k=-\left(2\pi\right)^2\left|k\right|^2$, hence $A$ is unbounded and negative definite. Moreover it is selfadjoint, as well. Now we analyze Hypothesis \eqref{st_con} for two specifications of the linear, bounded, positive semidefinite operator $Q\colon H\to H$.
	\begin{itemize}
		\item Let $Q=\text{Id}$. Then $\sigma_k=1,\,k\in\mathbb{Z}^d_0$, and  \eqref{st_con} reads $$\frac{1}{\left(2\pi\right)^{2r/\alpha}}\sum_{k\in\mathbb{Z}^d_0}\frac{1}{\left|k\right|^{2r/\alpha}}<\infty\quad \text{for some }r\in\left(0,\alpha\right),$$ which is satisfied if and only if $d=1$. Hence the stochastic convolution is defined only in dimension $d=1$.
		\item Set $Q=Q_\eta=\left(-\Delta\right)^{-\eta}$ for $\eta>0$, the negative fractional power of the Laplacian, defined as an operator $Q_\eta\colon H\to H$ such that $$Q_\eta f=\frac{1}{\left(2\pi\right)^{2\eta}}\sum_{k\in\mathbb{Z}^d_0}\frac{1}{\left|k\right|^{2\eta}}\hat{f}_k e_k,\quad f\in H.$$
		In this case the convergence of the infinite sum in \eqref{st_con} amounts to requiring $\eta>\left(\frac{d}{2r}-\frac{1}{\alpha}\right)\vee0$. Since $r$ is chosen freely in the interval $\left(0,\alpha\right)$, Hypothesis \eqref{st_con} is satisfied if and only if
		\begin{equation}\label{nontutte}
		\eta>\left(\frac{d-2}{2\alpha}\right)\vee0.
		\end{equation} 
		This fact can be interpreted as follows: the higher the dimension $d$, the weaker the effect of the noise on the high Fourier modes needs to be in order to have the well--posedness of the stochastic convolution.
	\end{itemize}
\end{example}
	\section{Smoothing effect of the Markov Transition Semigroup}\label{sec2}
	Let us introduce the Markov transition semigroup $R=\left(R_t\right)_t$ associated with the OU--processes $\left(Z^x\right)_{x\in H}$, which is given by
	\[
	R_t\phi\left(x\right)\coloneqq\mathbb{E}\left[\phi\left(Z^x_t\right)\right],\quad x\in H,\, \phi\in \mathcal{B}_b\left(H\right),\,t\ge0,
	\]
	where $B_b\left(H\right)$ is the space of bounded, real--valued, Borel--measurable functions in $H$.	Evidently, each $R_t$ is linear and bounded from $C_b\left(H\right)$ into itself and  $R_0$ is the identity.  Our aim is to prove that, under suitable conditions, the operator $R_t$ has a smoothing effect for every $t>0$. Specifically, given a function $\phi\in B_b\left(H\right)$, in the case $\alpha\in\left(\frac{1}{2},1\right)$ we are going to show that $R_t\phi\in C^1_b\left(H\right)$ and that the following gradient estimate holds:
	\begin{equation}\label{est}
	\sup_{x\in H}\norm{\nabla R_t\phi\left(x\right)}_H\le \frac{C}{t^{{\gamma}}}\norm{\phi}_\infty\quad\text{for every $t>0$, for some }0<{\gamma}<1,\,C>0.
	\end{equation}
	\subsection{Finite dimensional case $H=\mathbb{R}^N$}
	Let $H=\mathbb{R}^N$ and 	$W^N=\left[\begin{matrix}
		\beta^1&\cdots&\beta^N
	\end{matrix}\right]^T$. We start by presenting a theorem which allows to obtain an original derivation formula for the semigroup corresponding to the finite--dimensional OU processes $Z_t^\ell\left(x\right)$. They are defined as the unique, càdlàg solutions of the linear SDEs $dZ^{\ell}_t\left(x\right)=AZ^{\ell}_t\left(x\right)dt+\sqrt{Q}\,dW^N_{\ell_t},\,Z^{\ell}_0\left(x\right)=x,$ and  can be expressed by the variation of constant formula as follows: 
	\begin{equation}\label{det_eq}
	Z^\ell_t\left(x\right)=e^{tA}x+\int_{0}^{t}e^{\left(t-s\right)A}\sqrt{Q}\,dW^N_{\ell_s},\quad t\ge0,\,\mathbb{P}-\text{a.s.},	
	\end{equation}
	where $x\in\mathbb{R}^N$ and $\ell\colon\mathbb{R}_+\to\mathbb{R}_+$ is an increasing, càdlàg function such that $\ell_0=0$ and $\ell_t>0$ for every positive $t$: the set of functions with these properties will be denoted by $\mathbb{S}$. Note that, for every $\ell\in\mathbb{S}$, $W^N_\ell=\left(W^N_{\ell_t}\right)_t$ is a càdlàg martingale with respect to the filtration $\left(\mathcal{F}^N_{\ell_t}\right)_t$, where $\left(\mathcal{F}^N_t\right)_t$ is the minimal augmented filtration generated by $W^N$. Analogously, for every $\ell\in\mathbb{S}$, we introduce the filtrations $\mathbb{F}^n_\ell=\left(\mathcal{F}^{\beta^n}_{\ell_t}\right)_t,\,n\in\mathbb{N}$, and observe that $\beta^n_\ell=\left(\beta^n_{\ell_t}\right)_t$ is a càdlàg, $\mathbb{F}^n_\ell$--martingale. The proof of such theorem is essentially based on the deterministic time--change procedure described by Zhang in \cite[Section $2$]{Z}, but exploits the linear nature of our setting to avoid the application of the \emph{Bismut--Elworthy--Li's formula} (see, e.g., \cite[Proposition $8.21$]{DP_Intro}). For the sake of completeness we report its main passages.
	\begin{theorem}\label{determ}
		 	Let $t>0,\,\phi\in C_b\left(\mathbb{R}^N\right),\,\ell\in\mathbb{S}$ and assume that $\sigma_n^2>0, \,n=1,\dots,N$.  Then the function $\mathbb{E}\left[\phi\left(Z^\ell_t\left(\cdot\right)\right)\right]$ is differentiable at any point  $x\in\mathbb{R}^N$ in every direction $h\in\mathbb{R}^N$, and 
		\begin{equation}\label{no_bel_det}
		\left\langle\nabla\mathbb{E}\left[\phi\left(Z^\ell_t\left(x\right)\right)\right],h\right\rangle
		=
		\mathbb{E}\left[\phi\left(Z_t^\ell\left(x\right)\right)\left(\sum_{n=1}^{N}\frac{1}{\sigma_n}\frac{e^{-\lambda_n t}\left\langle h,e_n\right\rangle}{\int_{0}^{t}e^{-2\lambda_n\left(t-s\right)}d\ell_s}\int_{0}^{t}e^{-\lambda_n\left(t-s\right)}d\beta_{\ell_s}^n\right)\right].
		\end{equation}
	\end{theorem}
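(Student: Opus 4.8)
The plan is to exploit that, for a \emph{deterministic} time change $\ell$, the random variable $Z^\ell_t(x)$ is Gaussian, so its law admits an explicit density and the formula follows by differentiating that density in the starting point. First I would use that $A$ and $Q$ are simultaneously diagonalized by $\left(e_n\right)_n$ to decouple \eqref{det_eq} into $N$ scalar equations: writing $x_n=\left\langle x,e_n\right\rangle$, the $n$-th component $Z^{\ell,n}_t\left(x\right)\coloneqq\left\langle Z^\ell_t\left(x\right),e_n\right\rangle$ equals $e^{-\lambda_n t}x_n+\int_0^t e^{-\lambda_n\left(t-s\right)}\sigma_n\,d\beta^n_{\ell_s}$. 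Since for $\ell\in\mathbb{S}$ the process $\beta^n_\ell$ is an $\mathbb{F}^n_\ell$--martingale whose quadratic variation is the deterministic process $\ell$, the stochastic integral of the deterministic integrand against it is a centered Gaussian variable with variance $\sigma_n^2\int_0^t e^{-2\lambda_n\left(t-s\right)}\,d\ell_s$. Together with the independence of the $\left(\beta^n\right)_n$, this shows that $Z^\ell_t\left(x\right)$ is, in the coordinates $\left(e_n\right)_n$, a Gaussian vector with independent components, means $m_n\left(x\right)=e^{-\lambda_n t}x_n$ and variances $v_n=\sigma_n^2\int_0^t e^{-2\lambda_n\left(t-s\right)}\,d\ell_s$.

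Next I would record that each $v_n$ is strictly positive: the hypothesis $\sigma_n^2>0$ and the fact that $\ell\in\mathbb{S}$ forces $\ell_t>0$, so the measure $d\ell_s$ charges $\left[0,t\right]$ with positive mass, while the integrand $e^{-2\lambda_n\left(t-s\right)}$ is bounded below by a positive constant. Consequently $Z^\ell_t\left(x\right)$ has a smooth, strictly positive density $p\left(\cdot\,;x\right)$ on $\mathbb{R}^N$ which, in the basis $\left(e_n\right)_n$, factorizes as the product of the one--dimensional Gaussian densities with parameters $\left(m_n\left(x\right),v_n\right)$.

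I would then write $\mathbb{E}\left[\phi\left(Z^\ell_t\left(x\right)\right)\right]=\int_{\mathbb{R}^N}\phi\left(y\right)p\left(y;x\right)\,dy$ and differentiate under the integral sign in a direction $h$. Because only the means depend on $x$, with $\left\langle\nabla_x m_n\left(x\right),h\right\rangle=e^{-\lambda_n t}\left\langle h,e_n\right\rangle$, the log--derivative of a Gaussian density gives
\[
\left\langle\nabla_x p\left(y;x\right),h\right\rangle=p\left(y;x\right)\sum_{n=1}^N\frac{y_n-m_n\left(x\right)}{v_n}\,e^{-\lambda_n t}\left\langle h,e_n\right\rangle,
\]
where $y_n=\left\langle y,e_n\right\rangle$. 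As $\phi$ is bounded and the Gaussian weight together with its polynomial prefactor is integrable uniformly for $x$ near any fixed point, dominated convergence legitimizes the interchange of derivative and integral. Substituting $y=Z^\ell_t\left(x\right)$ back, i.e.\ replacing $y_n-m_n\left(x\right)$ by $\int_0^t e^{-\lambda_n\left(t-s\right)}\sigma_n\,d\beta^n_{\ell_s}$ and $v_n$ by $\sigma_n^2\int_0^t e^{-2\lambda_n\left(t-s\right)}\,d\ell_s$, the factor $\frac{y_n-m_n\left(x\right)}{v_n}\,e^{-\lambda_n t}$ collapses exactly to $\frac{1}{\sigma_n}\frac{e^{-\lambda_n t}}{\int_0^t e^{-2\lambda_n\left(t-s\right)}\,d\ell_s}\int_0^t e^{-\lambda_n\left(t-s\right)}\,d\beta^n_{\ell_s}$, yielding \eqref{no_bel_det}.

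The main obstacle I anticipate is the rigorous justification of the Gaussianity of $\int_0^t e^{-\lambda_n\left(t-s\right)}\sigma_n\,d\beta^n_{\ell_s}$ when $\ell$ is merely càdlàg and hence $\beta^n_\ell$ may jump; this is settled by approximating the integral with Riemann--Stieltjes sums $\sum_i e^{-\lambda_n\left(t-t_i\right)}\sigma_n\bigl(\beta^n_{\ell_{t_{i+1}}}-\beta^n_{\ell_{t_i}}\bigr)$, whose increments are independent centered Gaussians with deterministic variances $\ell_{t_{i+1}}-\ell_{t_i}$, so each sum is Gaussian and the limit is centered Gaussian with the stated variance. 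A secondary point is the domination needed for differentiation under the integral, which follows from the explicit Gaussian tails and the strict positivity of the $v_n$ established above; everything else reduces to elementary differentiation of a product of one--dimensional Gaussian densities.
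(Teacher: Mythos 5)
Your proposal is correct, and it reaches the paper's key intermediate fact --- that $Z^\ell_t\left(x\right)$ is Gaussian with mean $e^{tA}x$ and diagonal covariance $\int_0^t e^{2A\left(t-s\right)}Q\,d\ell_s$, i.e.\ \eqref{distribution_st} --- by a genuinely different route. The paper follows Zhang's deterministic time--change machinery: it regularizes $\ell$ by its Steklov averages $\ell^\epsilon$, inverts the (now strictly increasing) time change to rewrite $Z^{\ell^\epsilon}$ as a classical Ornstein--Uhlenbeck process driven by the untransformed Brownian motion, reads off its Gaussian law, and then passes to the limit $\epsilon\downarrow0$ using an $L^2$ estimate of Gronwall type and Helly's second theorem for the covariances. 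You instead compute the law of each Wiener integral $\int_0^t e^{-\lambda_n\left(t-s\right)}\sigma_n\,d\beta^n_{\ell_s}$ directly, as an $L^2$--limit of Riemann--Stieltjes sums whose summands are independent centered Gaussians with deterministic variances $\ell_{t_{i+1}}-\ell_{t_i}$; this is more elementary and self--contained (no approximation of $\ell$, no Helly, no invertibility issues), and it works precisely because the integrand is deterministic --- which is also why it would not survive a nonlinear drift, whereas the time--inversion step is the part of Zhang's scheme designed to do so. One small imprecision: the quadratic variation $\left[\beta^n_\ell\right]$ is not the deterministic process $\ell$ when $\ell$ jumps (only the predictable bracket $\left\langle\beta^n_\ell\right\rangle=\ell$ is), but this does not affect your argument since both the Gaussianity and the variance are obtained from the Riemann--sum approximation and the isometry in expectation. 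The concluding step --- differentiating the explicit Gaussian density in the mean parameter, with the domination you describe, and recognizing $\frac{y_n-m_n\left(x\right)}{v_n}e^{-\lambda_n t}$ as the factor in \eqref{no_bel_det} --- is exactly the paper's, and correctly avoids any Bismut--Elworthy--Li formula.
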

\begin{proof}
	For every $\epsilon>0$ denote by $\ell^\epsilon\left(t\right)\coloneqq\frac{1}{\epsilon}\int_{t}^{t+\epsilon}\ell_s\,ds,\,t\ge0$, the \emph{Steklov's averages} of $\ell$. They are strictly increasing, absolutely continuous functions such that, for every $t\ge0$, $\ell^\epsilon_t\downarrow\ell_t$ as $\epsilon\downarrow 0$. Let $\gamma^\epsilon\coloneqq\left(\ell^\epsilon\right)^{-1}\colon\left[\ell_0^\epsilon,\infty\right)\to\mathbb{R}_+$ and define $Z^{\ell^\epsilon}\left(x\right)$ as in \eqref{det_eq}, i.e., for every $x\in\mathbb{R}^N$ the process $Z^{\ell^\epsilon}\left(x\right)$ is the unique solution of the linear SDE $dZ^{\ell^\epsilon}_t\left(x\right)=AZ^{\ell^\epsilon}_t\left(x\right)dt+\sqrt{Q}\,dW^N_{\ell^\epsilon_t},\,Z^{\ell^\epsilon}_0=x$. Now introduce the time--shifted processes $Y_t^{\ell^\epsilon}\left(x\right)\coloneqq Z^{\ell^\epsilon}_{\gamma^\epsilon_t}\left(x\right),\,t\ge\ell^\epsilon_0$, and observe that
	\[
		Y^{\ell^\epsilon}_t\left(x\right)=x+\int_{\ell_0^\epsilon}^tAY_s^{\ell^\epsilon}\left(x\right)\dot{\gamma^\epsilon_s}\,ds+\sqrt{Q}\left(W^N_t-W^N_{\ell_0^\epsilon}\right),\quad t\ge\ell_0^\epsilon,\,\mathbb{P}-\text{a.s.},
	\]
	which shows that $dY_t^{\ell^\epsilon}\left(x\right)=AY_t^{\ell^\epsilon}\left(x\right)\dot{\gamma^\epsilon_t}dt+\sqrt{Q}\,dW^N_{t},\,Y_{\ell_0^\epsilon}^{\ell^\epsilon}\left(x\right)=x$. Therefore, 
	\[
		Y_t^{\ell^\epsilon}\left(x\right)=e^{A\gamma_t^\epsilon}x+\int_{\ell_0^\epsilon}^te^{A\left(\gamma_t^\epsilon-\gamma_s^\epsilon\right)}\sqrt{Q}\,dW^N_s,\quad t\ge\ell_0^\epsilon,\,\mathbb{P}-\text{a.s.}
	\]
	 In particular, since $\int_{\ell_0^\epsilon}^{\ell_t^\epsilon}e^{2A\left(t-\gamma^\epsilon_s\right)}Q\,ds=
	 \int_{0}^{t}e^{2A\left(t-s\right)}Q\,d\ell^\epsilon_s$, where the integral is to be interpreted entrywise, we have
	\[
		Z_t^{\ell^\epsilon}\left(x\right)=Y_{\ell_t^\epsilon}^{\ell^\epsilon}\left(x\right)
		\sim\mathcal{N}\left(e^{At}x, \int_{0}^{t}e^{2A\left(t-s\right)}Q\,d\ell^\epsilon_s\right).
	\]
	At this point, we fix a generic $t>0,\,x\in\mathbb{R}^N$ and use \cite[Equation ($2.6$)]{Z} (it is just an application of \emph{Gronwall lemma}) to get the convergence, in the $L^2$--sense, of $Z_t^{\ell^\epsilon}\left(x\right)\to Z_t^{\ell}\left(x\right)$ as $\epsilon\downarrow0$. Moreover, recalling that  $\ell^\epsilon_t\downarrow\ell_t$ as $\epsilon\downarrow 0$, we invoke \emph{Helly's second theorem} (see \cite[Theorem $7.3$]{Natanson}) to get $\int_{0}^{t}e^{2A\left(t-s\right)}Q\,d\ell^\epsilon_s\to\int_{0}^{t}e^{2A\left(t-s\right)}Q\,d\ell_s$ as $\epsilon\downarrow 0$. Whence, 
	\begin{equation}\label{distribution_st}
	Z_t^{\ell}\left(x\right)
	\sim\mathcal{N}\left(e^{At}x, \int_{0}^{t}e^{2A\left(t-s\right)}Q\,d\ell_s\right).
	\end{equation}
	If we take $\phi \in C_b\left(\mathbb{R}^N\right)$, an explicit computation simply based on the derivation of the normal density function implies, for every direction $h\in\mathbb{R}^N$,
	\[		
		\left\langle\nabla\mathbb{E}\left[\phi\left(Z^{\ell}_t\left(x\right)\right)\right],h\right\rangle
		=
		\mathbb{E}\left[\phi\left(Z_t^{\ell}\left(x\right)\right)\left\langle\left(\int_{0}^{t}e^{2A\left(t-s\right)}Q\,d\ell_s\right)^{-1}\left(\int_{0}^{t}e^{A\left(t-s\right)}\sqrt{Q}\,dW^N_{\ell_s}\right),e^{tA}h\right\rangle\right],
	\]
	which coincides with \eqref{no_bel_det} upon expanding the notation.
\end{proof}
\begin{rem}\label{bounded}
	The previous proof does not need the continuity of the function $\phi$. Therefore, \emph{Theorem} $\ref{determ}$ holds true for every $\phi\in\mathcal{B}_b\left(\mathbb{R}^N\right)$.
\end{rem}

Now we investigate the subordinated Brownian motion case. The intuition behind the argument is to condition with respect to the $\sigma$--algebra $\mathcal{F}^L$, so that it is possible to apply the deterministic time--shift result we have just obtained in Theorem \ref{determ} upon changing the reference probability space. Let us denote by $\mathbb{W}$ the space of continuous functions from $\mathbb{R}_+$ to $\mathbb{R}^N$ vanishing at $0$ and endow it with the Borel $\sigma$--algebra $\mathcal{B}\left(\mathbb{W}\right)$ associated with the topology of locally uniform convergence. The pushforward probability measure generated by  $W^N\left(\cdot\right)\colon\left(\Omega,\mathcal{F},\mathbb{P}\right)\to\left(\mathbb{W}, \mathcal{B}\left(\mathbb{W}\right) \right)$ is denoted by $\mathbb{P}^\mathbb{W}$ and makes the canonical process $\mathfrak{x}=\left(x_t\right)_t$ a Brownian motion, where by definition
\[
	x_t\left(w\right)\coloneqq w_t,\quad w\in\mathbb{W},\,t\ge0.	
\]
We work with the usual completion $\left(\mathbb{W}, \overline{\mathcal{B}\left(\mathbb{W}\right)}, \overline{\mathbb{P}^\mathbb{W}} \right)$ of this probability space: by \cite[Theorem $7.9$]{karat}, $\mathfrak{x}$ is still a Brownian motion with respect to its minimal augmented filtration, which in turn satisfies the usual hypotheses and is denoted by $\mathbb{F}^\mathbb{W}$. In particular, note that the completeness of the space $\left(\Omega,\mathcal{F},\mathbb{P}\right)$ implies the measurability of $W^N\left(\cdot\right)\colon\left(\Omega,\mathcal{F},\mathbb{P}\right)\to\left(\mathbb{W}, \overline{\mathcal{B}\left(\mathbb{W}\right)} \right)$ and the fact that $\overline{\mathbb{P}^\mathbb{W}}$ is still the pushforward probability measure generated by $W^N\left(\cdot\right)$. Obviously, $W^N\left(\cdot\right)$ is independent from $\mathcal{F}^L$: as a consequence, a regular conditional distribution of $W^N\left(\cdot\right)$ given $\mathcal{F}^L$ is the probability kernel 
\begin{equation}\label{rcd}
	\mathbb{P}\left(W^N\!\left(\cdot\right)\in\cdot\big|\mathcal{F}^L\right)\colon \Omega\times\overline{\mathcal{B}\left(\mathbb{W}\right)}\to\left[0,1\right]\,\,\text{ such that }\,\,
	\mathbb{P}\left(W^N\!\left(\cdot\right)\in A\right)\left(w\right)\coloneqq \overline{\mathbb{P}^\mathbb{W}}\left(A\right),\, \omega\in{\Omega},\,A\in \overline{\mathcal{B}\left(\mathbb{W}\right)}.
\end{equation}
As regards the space $\mathbb{S}$, for every $t\ge0$ we introduce the map $y_t\colon\mathbb{S}\to\mathbb{R}$ defined by $y_t\left(\ell\right)\coloneqq\ell_t,\,\ell\in\mathbb{S}$, and consider the $\sigma$--algebra $\mathcal{F}^\mathbb{S}\coloneqq\sigma\left(y^{-1}_t\left(B\right),\,B\in\mathcal{B}\left(\mathbb{R}\right),\,t\ge0\right)$. Since $L\left(\cdot\right)\colon\left(\Omega,\mathcal{F}^L,\mathbb{P}\right)\to\left(\mathbb{S},\mathcal{F}^\mathbb{S}\right)$ is measurable, we can construct the pushforward probability measure $\mathbb{P}^\mathbb{S}$ on $\left(\mathbb{S},\mathcal{F}^\mathbb{S}\right)$. At this point we take into account the product space $	\left(\mathbb{W}\times \mathbb{S},\overline{\mathcal{B}\left(\mathbb{W}\right)}\otimes\mathcal{F}^\mathbb{S}, \overline{\mathbb{P}^\mathbb{W}}\otimes\mathbb{P}^\mathbb{S}\right)
$ and note that, thanks to the mutual independence of $W^N\left(\cdot\right)$ and $L\left(\cdot\right)$, the product measure $\overline{\mathbb{P}^\mathbb{W}}\otimes\mathbb{P}^\mathbb{S}$ is indeed the pushforward probability measure generated by $\psi\colon \Omega\to \mathbb{W}\times \mathbb{S},\,\psi\left(\omega\right)\coloneqq\left(W^N_\cdot\left(\omega\right), L_\cdot\left(\omega\right)\right)$. Finally, we take the process $z=\left(z_t\right)_t$ defined by 
\[
	z_t\left(w,\ell\right)\coloneqq w_{\ell_t},\quad \left(w,\ell\right)\in\mathbb{W}\times\mathbb{S},\,t\ge0,
\]
and denote by $\mathbb{F}^z=\left(\mathcal{F}^z_t\right)_t$ its natural filtration.
By construction, $W^N_{L_t}=z_t\circ \psi$ for every $t\ge0$. Putting together all these properties, we can conclude that $z$ is a Lévy process with respect to the right--continuous filtration $\mathbb{F}^z_+=\left(\mathcal{F}^z_{t+}\right)_t$, where 
\[
	\mathcal{F}^z_{t+}\coloneqq\bigcap_{\epsilon>0}\mathcal{F}^z_{t+\epsilon},\quad t\ge0.
\]
Endowing the product space with this filtration, the stochastic integral of suitable processes with respect to $z$ is well defined. Let us consider then a deterministic, continuous, bounded, $\mathbb{R}^N$--valued  process $\xi=\left(\xi_t\right)_t$:  weaker assumptions can be done on it, but in our framework these are sufficient. Clearly the subordinated Brownian motion $W^N_L$ is adapted with respect to the right--continuous filtration $\psi^{-1}\left(\mathbb{F}^z_+\right)$, therefore the usual rules of change of probability space (see, e.g., \cite[§X-$2$]{jj}) entail
\begin{equation}\label{change}
\int_0^t\xi_s\cdot dW^N_{L_s}=\left(\int_{0}^t\xi_s\cdot dz_s\right)\circ \psi,\quad	t\ge0,\,\mathbb{P}-\text{a.s.}
\end{equation}
We conclude this preliminary discussion with an important substitution formula.
\begin{lemma}\label{peggio}
	Let $\xi=\left(\xi_t\right)_t$ be a deterministic, continuous, bounded, $\mathbb{R}^N$--valued process. Then, for any $t>0$,
	\begin{equation*}
\left(	\int_{0}^{t}\xi_s\cdot dz_s\right)\left(\cdot,\ell\right)
=\int_{0}^{t}\xi_s\cdot dx_{\ell_s}\quad \overline{\mathbb{P}^\mathbb{W}}-\text{a.s., for }\mathbb{P}^\mathbb{S}-\text{a.e. }\ell\in\mathbb{S},
	\end{equation*}
	where the  integral on the right--hand side of the equality is intended in the sense of stochastic integrals by càdlàg martingales on the filtered probability space  $\left(\mathbb{W}, \overline{\mathcal{B}\left(\mathbb{W}\right)}, \overline{\mathbb{P}^\mathbb{W}};\mathbb{F}^\mathbb{W}_\ell \right).$
\end{lemma}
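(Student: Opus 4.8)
\emph{Proof proposal.} The plan is to prove the identity first for elementary (piecewise constant, left--continuous) integrands, where both members collapse to one and the same finite sum, and then to recover the general continuous $\xi$ by approximation, keeping in mind that the two sides live in genuinely different integration theories: on the left the integral against the Lévy semimartingale $z$ on the product space $\left(\mathbb{W}\times\mathbb{S},\overline{\mathcal{B}\left(\mathbb{W}\right)}\otimes\mathcal{F}^\mathbb{S},\overline{\mathbb{P}^\mathbb{W}}\otimes\mathbb{P}^\mathbb{S}\right)$, and on the right, for frozen $\ell$, the integral against the continuous, time--changed $L^2$--martingale $\left(x_{\ell_s}\right)_s$ on $\left(\mathbb{W},\overline{\mathcal{B}\left(\mathbb{W}\right)},\overline{\mathbb{P}^\mathbb{W}};\mathbb{F}^\mathbb{W}_\ell\right)$. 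First I would fix $t>0$ and a sequence of partitions $0=t_0<\cdots<t_m=t$ with mesh tending to $0$, and set $\xi^{\left(m\right)}_s\coloneqq\sum_k\xi_{t_k}\mathbf{1}_{\left(t_k,t_{k+1}\right]}\left(s\right)$; since $\xi$ is continuous on the compact $\left[0,t\right]$, $\xi^{\left(m\right)}\to\xi$ uniformly there. For such an elementary integrand both members are, by definition of the elementary integral, $\sum_k\xi_{t_k}\cdot\left(z_{t_{k+1}}-z_{t_k}\right)$ on the left and $\sum_k\xi_{t_k}\cdot\left(x_{\ell_{t_{k+1}}}-x_{\ell_{t_k}}\right)$ on the right; because $z_s\left(w,\ell\right)=w_{\ell_s}=x_{\ell_s}\left(w\right)$ by construction, these two sums coincide \emph{identically} as functions of $\left(w,\ell\right)$. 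Writing $F^{\left(m\right)}\coloneqq\int_0^\cdot\xi^{\left(m\right)}_s\cdot dz_s$ and, for each $\ell$, $G^{\left(m\right)}\left(\cdot,\ell\right)\coloneqq\int_0^\cdot\xi^{\left(m\right)}_s\cdot dx_{\ell_s}$, we thus get the exact identity $F^{\left(m\right)}=G^{\left(m\right)}$ on $\mathbb{W}\times\mathbb{S}$; in particular $\left(w,\ell\right)\mapsto G^{\left(m\right)}\left(w,\ell\right)$ is jointly measurable, being a genuine product--space integral.

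Next I would treat the two limits separately. On the left, since $z$ is a Lévy process hence a semimartingale with respect to $\mathbb{F}^z_+$, the continuity of the stochastic integral under uniform (indeed ucp) convergence of the integrands gives $F^{\left(m\right)}_t\to\int_0^t\xi_s\cdot dz_s\eqqcolon F_t$ in $\overline{\mathbb{P}^\mathbb{W}}\otimes\mathbb{P}^\mathbb{S}$--probability. On the right, for each fixed $\ell\in\mathbb{S}$ the process $\left(x_{\ell_s}\right)_s$ is a continuous $L^2$--martingale with bracket $\ell_\cdot$, so the Itô isometry yields
\[
\mathbb{E}^{\overline{\mathbb{P}^\mathbb{W}}}\!\left[\left|G^{\left(m\right)}_t\left(\cdot,\ell\right)-G_t\left(\cdot,\ell\right)\right|^2\right]=\int_0^t\left|\xi^{\left(m\right)}_s-\xi_s\right|^2 d\ell_s\le\left(\sup_{s\le t}\left|\xi^{\left(m\right)}_s-\xi_s\right|\right)^2\ell_t\underset{m\to\infty}{\longrightarrow}0,
\]
whence $G^{\left(m\right)}_t\left(\cdot,\ell\right)\to G_t\left(\cdot,\ell\right)\coloneqq\int_0^t\xi_s\cdot dx_{\ell_s}$ in $L^2\left(\overline{\mathbb{P}^\mathbb{W}}\right)$, and so in probability, for every $\ell\in\mathbb{S}$.

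Finally I would splice the two convergences. Convergence of $F^{\left(m\right)}_t$ in product--probability lets me extract a subsequence $\left(m_j\right)$ along which $F^{\left(m_j\right)}_t\to F_t$ holds $\overline{\mathbb{P}^\mathbb{W}}\otimes\mathbb{P}^\mathbb{S}$--a.s.; by Fubini's theorem this means that for $\mathbb{P}^\mathbb{S}$--a.e. $\ell$ the slice $F^{\left(m_j\right)}_t\left(\cdot,\ell\right)\to F_t\left(\cdot,\ell\right)$ $\overline{\mathbb{P}^\mathbb{W}}$--a.s., in particular in $\overline{\mathbb{P}^\mathbb{W}}$--probability. For every such $\ell$ the elementary--level identity $F^{\left(m_j\right)}_t\left(\cdot,\ell\right)=G^{\left(m_j\right)}_t\left(\cdot,\ell\right)$, combined with $G^{\left(m_j\right)}_t\left(\cdot,\ell\right)\to G_t\left(\cdot,\ell\right)$ in probability, forces the two limits to agree by uniqueness of limits in probability, i.e. $F_t\left(\cdot,\ell\right)=G_t\left(\cdot,\ell\right)$ $\overline{\mathbb{P}^\mathbb{W}}$--a.s. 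This is precisely the asserted substitution formula.

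I expect the genuine obstacle to be the mismatch between the two integration theories rather than any single estimate. Because $z$ is $2\alpha$--stable with $2\alpha$ possibly below $1$, the integral $\int_0^t\xi_s\cdot dz_s$ admits no second moment and no $L^2$--isometry is available on the left, so convergence there must be handled in probability, while the right--hand side is naturally controlled in $L^2$; reconciling the two is exactly what forces the subsequence--plus--Fubini device above. A secondary point requiring care is the joint measurability in $\left(w,\ell\right)$ of the right--hand integral, which is what makes the slicing meaningful; this is supplied for free by the exact identity $F^{\left(m\right)}=G^{\left(m\right)}$ at the elementary level, since it identifies the approximating right--hand integrals with bona fide product--space stochastic integrals.
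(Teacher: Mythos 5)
Your proposal is correct and follows essentially the same route as the paper: approximate $\xi$ by elementary integrands built on the dyadic-type partition of $\left[0,t\right]$, observe that the two integrals coincide identically as finite sums at the elementary level, pass to the limit in product probability on the left and (for each fixed $\ell$) on the right, and reconcile the two via a subsequence plus slicing/Fubini argument. The only cosmetic difference is that you control the right-hand side by the Itô isometry in $L^2\left(\overline{\mathbb{P}^\mathbb{W}}\right)$, whereas the paper simply invokes convergence in probability of the elementary integrals by the same uniform-boundedness argument used on the left; both yield the needed convergence in probability for every $\ell$.
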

\begin{proof}
	Fix $t>0$ and introduce the elementary, predictable (with respect to both $\mathbb{F}^z_+$ and $\mathbb{F}^\mathbb{W}_\ell,\,\ell\in\mathbb{S}$) processes 
	\[
	\xi^m_s\coloneqq \xi_01_{\left\{0\right\}}\left(s\right)+\sum_{i=0}^{m-1}\xi_{t_i} 1_{]t_i,t_{i+1}]}\left(s\right),
	\]
	where $t_i=\frac{t}{m}i,\,i=0,\dots,m$. The continuity of $\xi$ implies that  $\xi^m\to\xi$ pointwise; furthermore, since $\xi$ is bounded, the sequence $\left(\xi^m\right)_m$ is uniformly bounded. This implies that 
	\begin{equation*}
	\int_{0}^{t}\xi_s\cdot dz_{s}=\left(\overline{\mathbb{P}^\mathbb{W}}\otimes\mathbb{P}^\mathbb{S}\right)\!-\lim_{m\to\infty}\int_{0}^{t}\xi^m_s\cdot dz_s.
	\end{equation*}
	Now  convergence in probability implies almost--sure convergence along a subsequence, hence we can say that for $\mathbb{P}^\mathbb{S}-$a.e. $\ell\in\mathbb{S}$, 
	\begin{equation}\label{su}
		\left(\int_{0}^{t}\xi^{m_k}_s\cdot dz_{s}\right)\left(\cdot,\ell\right)\underset{k\to\infty}{\longrightarrow}
		\left(\int_{0}^{t}\xi_s\cdot dz_{s}\right)\left(\cdot,\ell\right)\quad\overline{\mathbb{P}^\mathbb{W}}-\text{a.s.}
	\end{equation}
	With the same argument as above, we have
	\begin{equation}\label{su+1}
		\int_{0}^{t}\xi_s\cdot dx_{\ell_s}=\overline{\mathbb{P}^\mathbb{W}}\!-\lim_{k\to\infty}\int_{0}^{t}\xi^{m_k}_s\cdot dx_{\ell_s}\quad	\text{for every $ \ell\in\mathbb{S}$}.
	\end{equation}
On the other hand, by the very definition of stochastic integral it is immediate to notice that, for every $\left(w,\ell\right)\in\mathbb{W}\times\mathbb{S},$
\begin{equation*}
\left(\int_{0}^{t}\xi^{m_k}_s\cdot dz_s\right)\left(w,\ell\right)
=\sum_{i=0}^{{m_k}-1}\xi_{t_i}\cdot\left(z_{t_{i+1}}-z_{t_i}\right)\left(w,\ell\right)
=\sum_{i=0}^{{m_k}-1}\xi_{t_i}\cdot\left(x_{\ell_{t_{i+1}}}-x_{\ell_{t_{i}}}\right)\left(w\right)
=\left(\int_{0}^{t}\xi^{m_k}_s\cdot dx_{\ell_s}\right)\left(w\right).
\end{equation*}
Combining the last equation with \eqref{su} and \eqref{su+1} we get
\[
	\left(\int_{0}^{t}\xi_s\cdot dz_{s}\right)\left(\cdot,\ell\right)=
	\int_{0}^{t}\xi_s\cdot dx_{\ell_s}\quad \overline{\mathbb{P}^\mathbb{W}}-\text{a.s., for }\mathbb{P}^\mathbb{S}-\text{a.e. }\ell\in\mathbb{S},
\]
proving the thesis of the lemma.
\end{proof} 

A useful result due to \cite[Equation $\left(14\right)$]{Bog} shows that there exists a constant $c>0$ such that, for every $t>0$, the density $\eta_t$ of $L_t$ satisfies 
\[
\eta_t\left(s\right)\le c\,t\,s^{-1-\alpha}e^{-ts^{-\alpha}},\quad s>0.
\]
As a consequence, for every $p\ge1$ we have that ${L_t}^{-1}\in L^p$, with
\begin{equation}\label{subest}
\mathbb{E}\left[\frac{1}{L_t^p}\right]^{1/p}\le c_{\alpha,p}\frac{1}{t^{1/\alpha}}\quad \text{for some }c_{\alpha,p}>0.
\end{equation}
We are now in position to obtain the derivation formula for the Markov transition semigroup, together with an estimate on its gradient, in the subordinated Brownian motion case.
\begin{theorem}\label{detgrad}
		Let $t>0,\,\phi\in C_b\left(\mathbb{R}^N\right)$ and assume that $\sigma_n^2>0, \,n=1,\dots,N$.  Then the function $\mathbb{E}\left[\phi\left(Z^\cdot_t\right)\right]$ is differentiable at any point  $x\in\mathbb{R}^N$ in every direction $h\in\mathbb{R}^N$, and 
	\begin{equation}\label{no_bel}
	\left\langle\nabla\mathbb{E}\left[\phi\left(Z_t^x\right)\right],h\right\rangle
	=
	\mathbb{E}\left[\phi\left(Z_t^x\right)\left(\sum_{n=1}^{N}\frac{1}{\sigma_n}\frac{e^{-\lambda_n t}\left\langle h,e_n\right\rangle}{\int_{0}^{t}e^{-2\lambda_n\left(t-s\right)}dL_s}\int_{0}^{t}e^{-\lambda_n\left(t-s\right)}d\beta_{L_s}^n\right)\right].
	\end{equation}
	
	In addition, there exists $c_\alpha>0$ such that the following gradient estimate holds:
	\begin{equation}\label{est_finite}
	\sup_{x\in\mathbb{R}^N}\left|\nabla\mathbb{E}\left[\phi\left(Z_t^x\right)\right]\right|\le c_\alpha\norm{\phi}_\infty
	\sup_{n=1,\dots,N}\left(\frac{1}{\sigma_n}\sqrt[2\alpha]{\frac{2\alpha\lambda_n}{1-e^{-2\alpha\lambda_nt}}}e^{-\lambda_nt}\right)\quad \text{for every } t>0.
	\end{equation}
\end{theorem}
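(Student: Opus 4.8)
The plan is to reduce everything to the deterministic--time--change result of Theorem \ref{determ} by conditioning on the subordinator, and then to integrate the resulting formula over the law of $L$. Concretely, I would work on the product space $\left(\mathbb{W}\times\mathbb{S},\overline{\mathcal{B}\left(\mathbb{W}\right)}\otimes\mathcal{F}^\mathbb{S},\overline{\mathbb{P}^\mathbb{W}}\otimes\mathbb{P}^\mathbb{S}\right)$ introduced above. Since $\overline{\mathbb{P}^\mathbb{W}}\otimes\mathbb{P}^\mathbb{S}$ is the pushforward of $\psi$, using \eqref{change} I would write $Z^x_t=\zeta^x_t\circ\psi$ with $\zeta^x_t\left(w,\ell\right)=e^{tA}x+\left(\int_0^t e^{\left(t-s\right)A}\sqrt{Q}\,dz_s\right)\left(w,\ell\right)$, so that by Fubini $\mathbb{E}\left[\phi\left(Z^x_t\right)\right]=\int_\mathbb{S} g_\ell\left(x\right)\,d\mathbb{P}^\mathbb{S}\left(\ell\right)$, where $g_\ell\left(x\right)\coloneqq\int_\mathbb{W}\phi\left(\zeta^x_t\left(\cdot,\ell\right)\right)\,d\overline{\mathbb{P}^\mathbb{W}}$. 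By Lemma \ref{peggio}, for $\mathbb{P}^\mathbb{S}$--a.e. $\ell$ the inner integrand coincides $\overline{\mathbb{P}^\mathbb{W}}$--a.s. with $\phi$ evaluated at the deterministic--time--change process $Z^\ell_t\left(x\right)$ of \eqref{det_eq}; hence Theorem \ref{determ}, in the form of Remark \ref{bounded}, applies and shows that each $g_\ell$ is differentiable with $\scp{\nabla g_\ell\left(x\right),h}$ equal to the right--hand side of \eqref{no_bel_det}.

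The formula \eqref{no_bel} is then obtained by differentiating $\int_\mathbb{S} g_\ell\left(x\right)\,d\mathbb{P}^\mathbb{S}\left(\ell\right)$ under the integral sign and translating back through \eqref{change} and Lemma \ref{peggio}. The step I expect to be the \emph{main obstacle} is the justification of this interchange, i.e.\ producing an $x$--uniform, $\mathbb{P}^\mathbb{S}$--integrable bound on $\nabla g_\ell$. The crucial observation --- which also drives the second part --- is that, conditionally on $\mathcal{F}^L$ (equivalently, for fixed $\ell$), the variables $\int_0^t e^{-\lambda_n\left(t-s\right)}\,d\beta^n_{L_s}$, $n=1,\dots,N$, are independent and $\mathcal{N}\left(0,V_n\left(\ell\right)\right)$--distributed, with $V_n\left(\ell\right)\coloneqq\int_0^t e^{-2\lambda_n\left(t-s\right)}\,d\ell_s$; this is just the reading of $\beta^n_\ell$ as a continuous Gaussian martingale of quadratic variation $\ell$. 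Bounding $\left|\phi\right|\le\norm{\phi}_\infty$ and using that the $\overline{\mathbb{P}^\mathbb{W}}$--mean absolute value of an $\mathcal{N}\left(0,V_n\left(\ell\right)\right)$ variable equals $\sqrt{2/\pi}\,\sqrt{V_n\left(\ell\right)}$, one gets $\left|\nabla g_\ell\left(x\right)\right|\le\norm{\phi}_\infty\sqrt{2/\pi}\,\norm{h}_H\sum_{n=1}^N\sigma_n^{-1}e^{-\lambda_n t}V_n\left(\ell\right)^{-1/2}$, which is independent of $x$. Its $\mathbb{P}^\mathbb{S}$--integrability follows from $V_n\left(L\right)\ge e^{-2\lambda_n t}L_t$ together with the negative--moment bound \eqref{subest} (here even $\mathbb{E}[L_t^{-1/2}]\le\mathbb{E}[L_t^{-1}]^{1/2}<\infty$ suffices), so dominated convergence legitimates differentiating under $\int_\mathbb{S}$ and yields \eqref{no_bel}.

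For the estimate \eqref{est_finite} I would start from \eqref{no_bel}, bound $\left|\phi\right|\le\norm{\phi}_\infty$, and condition again on $\mathcal{F}^L$. Writing $\Xi_h$ for the bracketed sum in \eqref{no_bel}, the same conditional independence and Gaussianity show that $\Xi_h\mid\mathcal{F}^L$ is centered Gaussian, whence $\mathbb{E}\!\left[\left|\Xi_h\right|\mid\mathcal{F}^L\right]=\sqrt{2/\pi}\,\bigl(\sum_{n}\sigma_n^{-2}e^{-2\lambda_n t}\scp{h,e_n}^2 V_n\left(L\right)^{-1}\bigr)^{1/2}$. Applying Jensen's inequality ($\mathbb{E}\sqrt{\,\cdot\,}\le\sqrt{\mathbb{E}\,\cdot\,}$) reduces the problem to computing $\mathbb{E}[V_n\left(L\right)^{-1}]$, and here I would use the sub--Gaussian (variance--mixture) representation of the symmetric $2\alpha$--stable law. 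Since $U^n_t=\sigma_n\int_0^t e^{-\lambda_n\left(t-s\right)}d\beta^n_{L_s}$ is $\mathrm{stable}\left(2\alpha,0,\gamma_n\left(t\right),0\right)$ while, conditionally on $\mathcal{F}^L$, it is $\mathcal{N}\left(0,\sigma_n^2V_n\left(L\right)\right)$, matching Laplace transforms against \eqref{1} forces $\sigma_n^2 V_n\left(L\right)\overset{d}{=}L_{s_n}$ with $s_n=\sigma_n^{2\alpha}\left(1-e^{-2\alpha\lambda_n t}\right)/\left(2\alpha\lambda_n\right)$. By self--similarity $L_{s_n}\overset{d}{=}s_n^{1/\alpha}L_1$ and \eqref{subest}, this gives $\mathbb{E}[V_n\left(L\right)^{-1}]=\bigl(2\alpha\lambda_n/\left(1-e^{-2\alpha\lambda_n t}\right)\bigr)^{1/\alpha}\mathbb{E}[L_1^{-1}]$, so that $\sigma_n^{-2}e^{-2\lambda_n t}\mathbb{E}[V_n\left(L\right)^{-1}]$ is exactly the square of the $n$--th term in \eqref{est_finite} times $\mathbb{E}[L_1^{-1}]$. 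Bounding $\sum_n\left(\cdots\right)\scp{h,e_n}^2\le\bigl(\max_n\left(\cdots\right)\bigr)\norm{h}_H^2$ and taking the supremum over $\norm{h}_H=1$ and over $x$ then yields \eqref{est_finite} with the explicit constant $c_\alpha=\sqrt{\left(2/\pi\right)\mathbb{E}[L_1^{-1}]}$.
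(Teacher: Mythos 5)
Your proposal is correct and follows essentially the same route as the paper: condition on the subordinator via the product space $\left(\mathbb{W}\times\mathbb{S},\overline{\mathcal{B}\left(\mathbb{W}\right)}\otimes\mathcal{F}^\mathbb{S},\overline{\mathbb{P}^\mathbb{W}}\otimes\mathbb{P}^\mathbb{S}\right)$ and Lemma \ref{peggio}, apply Theorem \ref{determ} for fixed $\ell$, justify differentiation under the outer integral by an $x$--uniform integrable bound, and obtain \eqref{est_finite} from the conditional Gaussianity together with the identity $\int_0^t e^{-2\lambda_n\left(t-s\right)}dL_s\overset{d}{=}\bigl(\frac{1-e^{-2\alpha\lambda_n t}}{2\alpha\lambda_n}\bigr)^{1/\alpha}L_1$ and the negative--moment bound \eqref{subest}. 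Your only deviations are cosmetic: you dominate the derivative via the conditional $L^1$--norm and the crude bound $V_n\left(L\right)\ge e^{-2\lambda_n t}L_t$ where the paper uses the conditional $L^2$--norm and the exact stable law, and you identify that law by matching Laplace transforms rather than by the direct computation cited from \cite{BB}.
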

\begin{proof}
	Fix $t>0$ and $\phi\in C_b\left(\mathbb{R}^N\right)$. In what follows, we denote by $\mathbb{E}^\mathbb{W}\left[\cdot\right]$ the expected value of a random variable defined on $\left(\mathbb{W},\overline{\mathcal{B}\left(\mathbb{W}\right)},\overline{\mathbb{P}^\mathbb{W}}\right)$. Bearing in mind that $Z_t^x=e^{tA}x+\int_{0}^{t}e^{\left(t-s\right)A}\sqrt{Q}\,dW^N_{L_s}$,  by \eqref{change} we have, for every $x\in\mathbb{R}^N$,
	\[
		Z_t^x=\left(e^{tA}x+\int_{0}^{t}e^{\left(t-s\right)A}\sqrt{Q}\,dz_s\right)\circ \psi=\left(e^{tA}x+\int_{0}^{t}e^{\left(t-s\right)A}\sqrt{Q}\,dz_s\right)\left(W^N\left(\cdot\right),L\left(\cdot\right)\right)
	\quad \mathbb{P}-\text{a.s.}
	\]
	Therefore recalling the expression \eqref{rcd} for the regular conditional distribution $\mathbb{P}\left(W^N\!\left(\cdot\right)\in\cdot\big|\mathcal{F}^L\right)$, we apply the \emph{disintegration formula} for the conditional expectation to write 
	\begin{align*}
		\mathbb{E}\left[\phi\left(Z^x_t\right)\right]&
		=\mathbb{E}\left[\mathbb{E}\left[\phi\left(Z^x_t\right)\bigg|\mathcal{F}^L\right]\right]
		=
		\mathbb{E}\left[\int_\mathbb{W}\phi\left(\left(e^{tA}x+\int_{0}^{t}e^{\left(t-s\right)A}\sqrt{Q}\,dz_s\right)\left(w,L\left(\cdot\right)\right)\right)\overline{\mathbb{P}^\mathbb{W}}\left(dw\right)\right]\\&
		=
		\mathbb{E}\left[\restr{\mathbb{E}^\mathbb{W}\left[\phi\left(e^{tA}x+\int_{0}^{t}e^{\left(t-s\right)A}\sqrt{Q}\,dx_{\ell_s}\right)\right]}{\ell=L\left(\cdot\right)}\right]
		=\mathbb{E}\left[\restr{\mathbb{E}^\mathbb{W}\left[\phi\left(Z^\ell_t\left(x\right)\right)\right]}{\ell=L\left(\cdot\right)}\right],\quad x\in\mathbb{R}^N,
	\end{align*}
	where in the second--to--last equality we use Lemma \ref{peggio} and the fact that $\mathbb{P}^\mathbb{S}$ is the pushforward probability measure generated by $L\left(\cdot\right)$ on $\mathbb{S}$.
	Take $x\in\mathbb{R}^N$ and a direction $h\in\mathbb{R}^N$; if we can justify the derivation under the expected value, an application of \eqref{no_bel_det} immediately leads to \eqref{no_bel}, as the following computations based on the previous argument show:
	\begin{align}\label{mmm1}
		&\left\langle\nabla\mathbb{E}\left[\phi\left(Z_t^x\right)\right],h\right\rangle
		 =\mathbb{E}\left[\restr{\mathbb{E}^\mathbb{W}\left[\phi\left(Z_t^\ell\left(x\right)\right)\left(\sum_{n=1}^{N}\frac{1}{\sigma_n}\frac{e^{-\lambda_n t}\left\langle h,e_n\right\rangle}{\int_{0}^{t}e^{-2\lambda_n\left(t-s\right)}d\ell_s}\int_{0}^{t}e^{-\lambda_n\left(t-s\right)}dx_{\ell_s}^n\right)\right]}{\ell=L\left(\cdot\right)}\right]\\&
		\quad
		\!=\!\mathbb{E}\!\left[\sum_{n=1}^{N}\!\frac{1}{\sigma_n}\frac{e^{-\lambda_n t}\left\langle h,e_n\right\rangle}{\int_{0}^{t}\!e^{-2\lambda_n\left(t-s\right)}dL_s}\!\left\{\!\int_\mathbb{W}\!\left(\!\phi\!\left(\!e^{tA}x\!+\!\int_{0}^{t}\!e^{\left(t-s\right)A}\sqrt{Q}\,dz_s\right)\!\!\times\!\!\left(\int_{0}^{t}\!e^{-\lambda_n\left(t-s\right)}dz_{s}^n\right)\!\!\right)\!\left(w,L\left(\cdot\right)\right)\overline{\mathbb{P}^\mathbb{W}}\!\left(dw\right)\right\}\!\right]\notag\\&
		\quad
		\!=\mathbb{E}\left[\mathbb{E}\left[\phi\left(Z_t^x\right)\left(\sum_{n=1}^{N}\frac{1}{\sigma_n}\frac{e^{-\lambda_n t}\left\langle h,e_n\right\rangle}{\int_{0}^{t}e^{-2\lambda_n\left(t-s\right)}dL_s}\int_{0}^{t}e^{-\lambda_n\left(t-s\right)}d\beta_{L_s}^n\right)\Bigg|\mathcal{F}^L\right]\right].\notag
	\end{align}
	Indeed, such a derivation is licit, since \emph{Jensen's inequality} and  \eqref{distribution_st} entail
		\begin{align}\label{mmm2}
		\left|\restr{\mathbb{E}^\mathbb{W}\left[\phi\left(Z_t^\ell\left(x\right)\right)\left(\sum_{n=1}^{N}\frac{1}{\sigma_n}\frac{e^{-\lambda_n t}\left\langle h,e_n\right\rangle}{\int_{0}^{t}e^{-2\lambda_n\left(t-s\right)}d\ell_s}\int_{0}^{t}e^{-\lambda_n\left(t-s\right)}dx_{\ell_s}^n\right)\right]}{\ell=L\left(\cdot\right)}\right|^2
		\le\norm{\phi}_\infty^2\sum_{n=1}^N\frac{1}{\sigma_n^2}\frac{e^{-2\lambda_nt}\left|\left\langle h,e_n\right\rangle\right|^2}{\int_{0}^{t}e^{-2\lambda_n\left(t-s\right)}dL_s},
	\end{align}
with the right--hand side which does not depend on $x$ and is integrable. In fact, for every $n=1,\dots,N$, recalling that $L_1\sim \text{stable}\left(\alpha,1,\bar{c}^{1/\alpha},0\right)$ by \eqref{forg}, we have
	\[
	\int_{0}^{t}e^{-2\lambda_n\left(t-s\right)}dL_s\sim \text{stable}\left(\alpha,1,\bar{c}^{\frac{1}{\alpha}}\left(\frac{1-e^{-2\alpha\lambda_nt}}{2\alpha\lambda_n}\right)^{1/\alpha},0\right)\Longrightarrow
		\int_{0}^{t}e^{-2\lambda_n\left(t-s\right)}dL_s\sim \left(\frac{1-e^{-2\alpha\lambda_nt}}{2\alpha\lambda_n}\right)^{\frac{1}{\alpha}}L_1,
	\]
hence  by \eqref{subest} there exists $c_\alpha>0$ such that
\begin{multline}\label{+1}
	\mathbb{E}\left[\sum_{n=1}^N\frac{1}{\sigma_n^2}\frac{e^{-2\lambda_nt}\left|\left\langle h,e_n\right\rangle\right|^2}{\int_{0}^{t}e^{-2\lambda_n\left(t-s\right)}dL_s}\right]
	\le\mathbb{E}\left[\frac{1}{L_1}\right]\left(\sum_{n=1}^N\frac{e^{-2\lambda_nt}}{\sigma_n^2}\left(\frac{2\alpha\lambda_n}{1-e^{-2\alpha\lambda_nt}}\right)^{\frac{1}{\alpha}}\left|\left\langle h,e_n\right\rangle\right|^2\right)
	\\\le c_\alpha\sum_{n=1}^N\frac{e^{-2\lambda_nt}}{\sigma_n^2}\left(\frac{2\alpha\lambda_n}{1-e^{-2\alpha\lambda_nt}}\right)^{\frac{1}{\alpha}}\left|\left\langle h,e_n\right\rangle\right|^2.
\end{multline}

Concerning the gradient estimate, it is sufficient to combine \eqref{mmm1}, \eqref{mmm2} \& \eqref{+1} and to recall that the $L^1$--norm of a random variable is smaller than its $L^2$--norm to get
\begin{equation*}
\left|\left\langle\nabla\mathbb{E}\left[\phi\left(Z_t^x\right)\right],h\right\rangle\right|\le c_\alpha\norm{\phi}_\infty\sup_{n=1,\dots,N}\left(\frac{1}{\sigma_n}\sqrt[2\alpha]{\frac{2\alpha\lambda_n}{1-e^{-2\alpha\lambda_nt}}}e^{-\lambda_nt}\right)\left|h\right|,\quad x,h\in\mathbb{R}^N,
\end{equation*}
where the constant $c_\alpha$ is allowed to be different from the one in \eqref{+1}. The desired inequality \eqref{est_finite} is then recovered taking the $\sup$ for $\left|h\right|\le1$, and the proof is complete.
\end{proof}
As in Remark \ref{bounded}, note that Theorem \ref{detgrad} holds true for every $\phi\in\mathcal{B}_b\left(\mathbb{R}^N\right)$. 
\subsection{Infinite dimensional case}
In this subsection we analyze the general case where $H$ is infinite dimensional. Assuming $\sigma_n^2>0,\,n\in\mathbb{N}$, let us introduce the following Hypothesis:
\begin{equation}\label{par_Hy1}\tag{\lowerRomannumeral{2}}
	\sup_n\left(\frac{1}{\sigma_n}\sqrt[2\alpha]{\frac{2\alpha\lambda_n}{1-e^{-2\alpha\lambda_nt}}}e^{-\lambda_nt}\right)\le C_t\quad \text{ for every } t>0,\, \text{for some function }C_t>0.
\end{equation}
In this setting, for every $h\in H$ and $t>0$, we can define the real--valued random variable 
\[
\sum_{n=1}^{\infty}\frac{1}{\sigma_n}\frac{e^{-\lambda_n t}\left\langle h,e_n\right\rangle}{\int_{0}^{t}e^{-2\lambda_n\left(t-s\right)}dL_s}\int_{0}^{t}e^{-\lambda_n\left(t-s\right)}d\beta_{L_s}^n
\coloneqq L^2-\lim_{N\to\infty}\left(\sum_{n=1}^{N}\frac{1}{\sigma_n}\frac{e^{-\lambda_n t}\left\langle h,e_n\right\rangle}{\int_{0}^{t}e^{-2\lambda_n\left(t-s\right)}dL_s}\int_{0}^{t}e^{-\lambda_n\left(t-s\right)}d\beta_{L_s}^n\right).
\]
Indeed, with the same argument as the one in \eqref{+1},  Hypothesis \eqref{par_Hy1} yields
\begin{align*}
\mathbb{E}\left[\left|\sum_{n=m}^M\frac{1}{\sigma_n}\frac{e^{-\lambda_n t}\left\langle h,e_n\right\rangle}{\int_{0}^{t}e^{-2\lambda_n\left(t-s\right)}dL_s}\int_{0}^{t}e^{-\lambda_n\left(t-s\right)}d\beta_{L_s}^n\right|^2\right]
&\le c_\alpha \sum_{n=m}^M\frac{e^{-2\lambda_nt}}{\sigma_n^2}\left(\frac{2\alpha\lambda_n}{1-e^{-2\alpha\lambda_nt}}\right)^{\frac{1}{\alpha}}\left|\left\langle h,e_n\right\rangle\right|^2\\
&\le c_{\alpha}\,C_t^2 \,\left(\sum_{n=m}^M\left|\left\langle h,e_n\right\rangle\right|^2\right)\underset{m,M\to\infty}{\longrightarrow}0,
\end{align*}
where $c_\alpha>0$. In particular,
\begin{equation}\label{var}
\mathbb{E} \left[\left|\sum_{n=1}^{\infty}\frac{1}{\sigma_n}\frac{e^{-\lambda_n t}\left\langle h,e_n\right\rangle}{\int_{0}^{t}e^{-2\lambda_n\left(t-s\right)}dL_s}\int_{0}^{t}e^{-\lambda_n\left(t-s\right)}d\beta_{L_s}^n\right|^2\right]^{\frac{1}{2}}\le \sqrt{c_\alpha}\,C_t\,\norm{h}_H.
\end{equation}
Hence the following, useful property holds:
\begin{equation}\label{property}
\sum_{n=1}^{\infty}\frac{1}{\sigma_n}\frac{e^{-\lambda_n t}\left\langle h_m,e_n\right\rangle}{\int_{0}^{t}e^{-2\lambda_n\left(t-s\right)}dL_s}\int_{0}^{t}e^{-\lambda_n\left(t-s\right)}d\beta_{L_s}^n\overset{L^2}{\longrightarrow}
\sum_{n=1}^{\infty}\frac{1}{\sigma_n}\frac{e^{-\lambda_n t}\left\langle h,e_n\right\rangle}{\int_{0}^{t}e^{-2\lambda_n\left(t-s\right)}dL_s}\int_{0}^{t}e^{-\lambda_n\left(t-s\right)}d\beta_{L_s}^n \quad \text{as }h_m\to h.
\end{equation}
At this point we can present the main theorem of the paper.
\begin{theorem}\label{main}
	 Assume $\sigma_n^2>0,\,n\in\mathbb{N}$, together with Hypotheses \eqref{st_con}\&\,\eqref{par_Hy1}. 
	 
	 Then for every $\phi\in B_b\left(H\right)$ and $t>0$ the function $R_t\phi\in C^1_b\left(H\right)$ and there exists $c_\alpha>0$ such that
	 \begin{equation}\label{vabecedo}
	 		\sup_{x\in H}\norm{\nabla R_t\phi\left(x\right)}_H\le c_\alpha \,C_t\norm{\phi}_\infty\quad \text{for every }t>0.
	 \end{equation}
	  
	 Moreover, given $\phi\in C_b\left(H\right)$ and $t>0$, for every $x,h\in H$ the Gateaux derivative of $R_t\phi$ at $x$ along the direction $h$ is given by
	\begin{equation}\label{no_BEL}
	\left\langle\nabla R_t\phi\left(x\right),h\right\rangle
	=\mathbb{E}\left[\phi\left(Z_t^x\right)\left(\sum_{n=1}^{\infty}\frac{1}{\sigma_n}\frac{e^{-\lambda_n t}\left\langle h,e_n\right\rangle}{\int_{0}^{t}e^{-2\lambda_n\left(t-s\right)}dL_s}\int_{0}^{t}e^{-\lambda_n\left(t-s\right)}d\beta_{L_s}^n\right)\right].
	\end{equation}
\end{theorem}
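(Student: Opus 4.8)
The plan is to reduce the infinite-dimensional claim to the finite-dimensional identity \eqref{no_bel} of Theorem~\ref{detgrad} by a cylindrical (Galerkin) approximation, treating $\phi\in C_b(H)$ first --- where the formula \eqref{no_BEL} genuinely holds --- and only afterwards extending the $C^1_b$-regularity and the bound \eqref{vabecedo} to $\phi\in\mathcal{B}_b(H)$. Throughout I write $\Lambda_h$ for the random variable on the right-hand side of \eqref{no_BEL}, and $\Lambda_h^{(N)}$ for its $N$-th partial sum; by \eqref{var} one has $\Lambda_h\in L^2(\Omega)$ with $\norm{\Lambda_h}_{L^2}\le\sqrt{c_\alpha}\,C_t\norm{h}_H$, and $\Lambda_h$ is linear and, by \eqref{property}, $L^2$-continuous in $h$. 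I also set $D_h(x):=\mathbb{E}[\phi(Z_t^x)\Lambda_h]$, the candidate directional derivative.

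First I would treat the cylindrical functions $\phi_N:=\phi\circ\pi_N$. Since $\pi_N Z_t^x=e^{tA}\pi_N x+\sum_{n=1}^N U_t^n e_n$ depends only on $\pi_N x$ and on $\beta^1,\dots,\beta^N,L$, after the identification $H_N\cong\mathbb{R}^N$ the function $R_t\phi_N$ coincides with the $N$-dimensional Markov semigroup of Theorem~\ref{detgrad} evaluated at $\pi_N x$; hence $\scp{\nabla R_t\phi_N(x),h}=\mathbb{E}[\phi_N(Z_t^x)\,\Lambda_h^{(N)}]$ by \eqref{no_bel}. The remaining tail of the series may be adjoined for free: conditioning on $\mathcal{F}^L$, for $n>N$ the factor $\int_0^t e^{-\lambda_n(t-s)}d\beta_{L_s}^n$ is a centred Gaussian that is conditionally independent of $\phi_N(Z_t^x)$ (which involves only $\beta^1,\dots,\beta^N$), so each such term has vanishing expectation. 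This promotes the finite sum to the full series and yields $\scp{\nabla R_t\phi_N(x),h}=\mathbb{E}[\phi_N(Z_t^x)\Lambda_h]$, i.e.\ \eqref{no_BEL} for cylindrical $\phi$.

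For general $\phi\in C_b(H)$ I would let $N\to\infty$. Hypothesis \eqref{st_con} guarantees $\tilde Z_{A,Q}(t)\in H$, so $\pi_N Z_t^x\to Z_t^x$ in $H$, and, $\phi$ being bounded and continuous, dominated convergence yields both $R_t\phi_N(x)\to R_t\phi(x)$ and --- using $\Lambda_h\in L^1(\Omega)$ --- $\mathbb{E}[\phi_N(Z_t^x)\Lambda_h]\to D_h(x)$. To convert this into differentiability I would integrate the cylindrical identity along the segment $u\mapsto x+uh$, writing $R_t\phi_N(x+\tau h)-R_t\phi_N(x)=\int_0^\tau\mathbb{E}[\phi_N(Z_t^{x+uh})\Lambda_h]\,du$, and pass to the limit to get $R_t\phi(x+\tau h)-R_t\phi(x)=\int_0^\tau D_h(x+uh)\,du$. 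As $u\mapsto D_h(x+uh)$ is continuous (again dominated convergence, $\phi$ continuous and $\Lambda_h\in L^1$), the fundamental theorem of calculus delivers the Gateaux derivative $\scp{\nabla R_t\phi(x),h}=D_h(x)$, which is \eqref{no_BEL}. Then \eqref{vabecedo} is Cauchy--Schwarz, $|D_h(x)|\le\norm{\phi}_\infty\norm{\Lambda_h}_{L^2}\le\sqrt{c_\alpha}\,C_t\norm{\phi}_\infty\norm{h}_H$, and since $h\mapsto D_h(x)$ is linear and bounded it is represented by a genuine gradient $\nabla R_t\phi(x)\in H$; finally $R_t\phi\in C^1_b(H)$ because $\norm{\nabla R_t\phi(x_m)-\nabla R_t\phi(x)}_H\le\sqrt{c_\alpha}\,C_t\,\mathbb{E}[|\phi(Z_t^{x_m})-\phi(Z_t^x)|^2]^{1/2}\to0$ as $x_m\to x$.

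The remaining step --- extending $C^1_b$-regularity and \eqref{vabecedo} (though \emph{not} the formula) to $\phi\in\mathcal{B}_b(H)$ --- is the one I expect to be hardest, since every passage above used the a.s.\ continuity $\phi(Z_t^{x_m})\to\phi(Z_t^x)$, which fails for a merely Borel $\phi$. The remedy is to bring in the conditional Gaussian structure: given $\mathcal{F}^L$, the variable $Z_t^x$ is Gaussian (finite-dimensionally \eqref{distribution_st}), so each $x\mapsto\mathbb{E}^\mathbb{W}[\phi(Z_t^\ell(x))]$ is a Gaussian convolution, hence smooth with derivative furnished, for Borel $\phi$, by the conditional analogue of \eqref{no_bel_det}; the bound \eqref{+1} combined with Hypothesis \eqref{par_Hy1} makes this conditional derivative continuous in $x$ and dominated, uniformly in $x$, by an $L^1(\mathbb{P}^\mathbb{S})$ function. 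Approximating $\phi$ by a uniformly bounded sequence $\phi_m\in C_b(H)$ with $\phi_m\to\phi$ pointwise, one then passes the segment identity and the Cauchy--Schwarz bound to the limit exactly as before, the Gaussian-smoothing representation of $D_h$ supplying the $x$-continuity that boundedness of $\phi$ alone cannot. This yields $R_t\phi\in C^1_b(H)$ with \eqref{vabecedo}, while the loss of the explicit formula \eqref{no_BEL} for discontinuous data is precisely the finite- versus infinite-dimensional asymmetry anticipated in the introduction.
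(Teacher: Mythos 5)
For $\phi\in C_b(H)$ your argument is essentially the paper's: reduce to the finite\-/dimensional formula \eqref{no_bel} on the Galerkin projections, write the increment as $\int_0^1\scp{\nabla R_t\phi_N(x+\rho h),h}\,d\rho$, and pass to the limit using \eqref{var}--\eqref{property} and dominated convergence; your extra observation that the tail terms $n>N$ of $\Lambda_h$ kill against $\phi_N(Z_t^x)$ after conditioning on $\mathcal{F}^L$ is correct and merely reorganizes what the paper does by keeping partial sums on both factors. This half of the proof is sound.

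The genuine gap is in the extension to $\phi\in\mathcal{B}_b(H)$. Your plan hinges on ``approximating $\phi$ by a uniformly bounded sequence $\phi_m\in C_b(H)$ with $\phi_m\to\phi$ pointwise,'' but no such sequence exists for a general bounded Borel $\phi$: pointwise limits of continuous functions are only Baire class one, and the Borel hierarchy on an infinite\-/dimensional Hilbert space goes far beyond that. Even if one replaces this by a functional monotone class argument, the property you are trying to propagate --- $R_t\phi\in C^1_b(H)$ --- is not stable under bounded pointwise limits: a pointwise limit of functions with uniformly bounded gradients is only Lipschitz, so the class of $\phi$ for which the conclusion holds is not closed under the limits the monotone class theorem requires. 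Your proposed remedy via the conditional Gaussian structure of $Z_t^x$ given $\mathcal{F}^L$ could in principle be pushed through, but it requires the full infinite\-/dimensional Cameron--Martin machinery (a.s.\ trace\-/class conditional covariance, a.s.\ membership of $e^{tA}h$ in its Cameron--Martin space, total\-/variation continuity of the translated Gaussians to get $x$-continuity of $D_h$ for discontinuous $\phi$), none of which is supplied in the sketch.

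The paper's route avoids all of this with a short semigroup argument that you should adopt: from \eqref{vabecedo} for smooth $\phi$ and the mean value theorem one gets the Lipschitz bound $\left|R_t\phi(x)-R_t\phi(y)\right|\le c_\alpha C_t\norm{\phi}_\infty\norm{x-y}_H$ for $\phi\in C_b^2(H)$; since $R_t$ is Markovian, \cite[Lemma~$7.1.5$]{DP1} extends this \emph{estimate} (not the differentiability) to every $\phi\in\mathcal{B}_b(H)$, so $R_{t-s}\phi$ is bounded and Lipschitz, in particular continuous. Writing $R_t\phi=R_s\left(R_{t-s}\phi\right)$ for some $0<s<t$ and applying the already established $C_b\to C_b^1$ result to the outer factor yields $R_t\phi\in C^1_b(H)$ together with \eqref{vabecedo}. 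This is the step your proposal is missing.
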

\begin{proof}
Fix $t>0$ and a function $\phi\in C_b\left(H\right)$. 

We first consider the case $\dim H=N,$ identifying $H$ with $\mathbb{R}^N$, as usual. Evidently \eqref{no_BEL} coincides with \eqref{no_bel} and the map $x\mapsto\nabla R_t\left(x\right)$ is a continuous function from $\mathbb{R}^N$ into itself: this follows from dominated convergence, together with $\phi\in C_b\left(\mathbb{R}^N\right)$ and $Z_t^{x_n}\to Z_t^{x}$ a.s. as $x_n\to x$. Moreover, Hypothesis \eqref{par_Hy1} applied to \eqref{est_finite} directly entails \eqref{vabecedo}, 
therefore $R_t\phi\in C_b^1\left(\mathbb{R}^N\right)$. In order to pass to infinite dimension it is convenient to write
\begin{align}\label{trick}
R_t\phi\left(x+h\right)-R_t\phi\left(x\right)&=\int_{0}^{1} \left\langle\nabla R_t\phi\Big(\left(1-\rho \right)x+\rho \left(x+h\right)\Big)\,,\,h\right\rangle d\rho\notag\\&
=
\int_{0}^{1}\mathbb{E}\left[\phi\left(Z_t^{x+\rho h}\right)\left(\sum_{n=1}^{N}\frac{1}{\sigma_n}\frac{e^{-\lambda_n t}\left\langle h,e_n\right\rangle}{\int_{0}^{t}e^{-2\lambda_n\left(t-s\right)}dL_s}\int_{0}^{t}e^{-\lambda_n\left(t-s\right)}d\beta_{L_s}^n\right)\right]d\rho.
\end{align}

We now consider the general case $\dim H=\infty$. Let $\pi_N$ be the projection onto the first $N$ Fourier components and $H_N$ be its range. Due to the diagonal structure of our model, the projections $\pi_NZ_t^x$ of the OU--process are, $\mathbb{P}-$a.s.,
\[
\pi_N Z^x_t=\sum_{n=1}^Ne^{-\lambda_nt}\left\langle x,e_n\right\rangle e_n+\sum_{n=1}^N\left(\int_{0}^{t}e^{-\lambda_n\left(t-s\right)}\sigma_nd\beta_{L_s}^n\right)e_n, \quad N\in\mathbb{N}.
\]
Therefore introducing the operators $A_N\coloneqq\restr{A}{H_N}$ and $Q_N\coloneqq\restr{Q}{H_N}$, which map $H_N$ into itself, we can write $\pi_NZ^x_t=e^{tA_N}\left(\pi_Nx\right)+\tilde{Z}_{A_N,Q_N}\left(t\right)$: this shows that such projections are OU--processes in $H_N$. Thus, the dominated convergence theorem together with the expression in \eqref{trick} and the continuity of $\phi$ give
\begin{align*}
R_t\phi\left(x+h\right)-R_t\phi\left(x\right)&=\lim_{N\to\infty}\mathbb{E}\left[\phi\left(\pi_NZ_t^{x+h}\right)-\phi\left(\pi_NZ_t^{x}\right)\right]\\
&=\lim_{N\to\infty}\int_{0}^{1}\mathbb{E}\left[\phi\left(\pi_NZ_t^{x+\rho h}\right)\left(\sum_{n=1}^{N}\frac{1}{\sigma_n}\frac{e^{-\lambda_n t}\left\langle h,e_n\right\rangle}{\int_{0}^{t}e^{-2\lambda_n\left(t-s\right)}dL_s}\int_{0}^{t}e^{-\lambda_n\left(t-s\right)}d\beta_{L_s}^n\right)\right]d\rho
\\&=
\int_{0}^1\mathbb{E}\left[\phi\left(Z_t^{x+\rho h}\right)\left(\sum_{n=1}^{\infty}\frac{1}{\sigma_n}\frac{e^{-\lambda_n t}\left\langle h,e_n\right\rangle}{\int_{0}^{t}e^{-2\lambda_n\left(t-s\right)}dL_s}\int_{0}^{t}e^{-\lambda_n\left(t-s\right)}d\beta_{L_s}^n\right)\right]d\rho.
\end{align*}
Now we can define $D_{t,x}\left(h\right)\coloneqq \mathbb{E}\left[\phi\left(Z_t^x\right)\left(\sum_{n=1}^{\infty}\frac{1}{\sigma_n}\frac{e^{-\lambda_n t}\left\langle h,e_n\right\rangle}{\int_{0}^{t}e^{-2\lambda_n\left(t-s\right)}dL_s}\int_{0}^{t}e^{-\lambda_n\left(t-s\right)}d\beta_{L_s}^n\right)\right]$: it is the Fr\'echet differential of $R_t\phi$ at $x$ (hence, in particular, \eqref{no_BEL} is verified). To see this, it is sufficient to note that the linear operator $D_{t,x}\left(\cdot\right)$ is continuous by the property in \eqref{property} and to apply H\"older's inequality, the dominated convergence theorem and \eqref{var} to get, for a positive constant $c_\alpha$,
\begin{align*}
\left|R_t\phi\left(x+h\right)-R_t\phi\left(x\right)-D_{t,x}\left(h\right)\right|\le 
c_{\alpha}\,C_t\norm{h}_H\int_{0}^{1}\mathbb{E}\left[\left|\phi\left(Z_t^{x+\rho h}\right)-\phi\left(Z_t^x\right)\right|^2\right]^{1/2}d\rho
=o\left(\norm{h}_H\right).
\end{align*}
The upper bound \eqref{vabecedo} for the norm of the gradient is then obtained by \eqref{var} from the next, straightforward computation:
\begin{equation*}
\norm{\nabla R_t\phi\left(x\right)}_H=\sup_{\norm{h}_H \le 1}\left|\left\langle\nabla R_t\phi\left(x\right),h\right\rangle\right|=\sup_{\norm{h}_H \le 1}\left|D_{t,x}\left(h\right)\right|\le c_{\alpha}\,C_t\,\norm{\phi}_\infty,\quad x\in H.
\end{equation*}
 We also note that
\[
\sup_{\norm{h}_H\le1}\left|\left(D_{t,x_n}-D_{t,x}\right)\left(h\right)\right|\le c_{\alpha}\,C_t\,\mathbb{E}\left[\left|\phi\left(Z_t^{x_n}\right)-\phi\left(Z_t^x\right)\right|^2\right]^{1/2}\to0 \quad \text{as }x_n\to x:
\] 
this proves the continuity of the map $x\mapsto D_{t,\cdot}$, hence $R_t\phi\in C^1_b\left(H\right)$.

Finally, we need to study the case where $\phi$ is just Borel measurable and bounded, without the hypothesis of continuity. In order to do this, it is sufficient to observe that by the \emph{mean value theorem} and \eqref{vabecedo} we have, for every $\phi\in C_b^2\left(H\right),$
\begin{equation}\label{lip}
	\left|R_t\phi\left(x\right)-R_t\phi\left(y\right)\right|\le c_{\alpha}\,C_t\,\norm{\phi}_\infty\norm{x-y}_H,\quad x,y\in H.
\end{equation}
Being $R_t$ Markovian, \cite[Lemma $7.1.5$]{DP1} implies that the same holds true for every $\phi\in B_b\left(H\right)$. In particular, $R_t$ maps bounded, Borel measurable functions in bounded, Lip--continuous functions. The semigroup law let us write $R_t\phi=R_s\left(R_{t-s}\phi\right)$ for some $0<s<t$, which proves $R_t\phi\in C_b^1\left(H\right)$. The bound \eqref{vabecedo} follows from \eqref{lip}, hence the proof is complete.
\end{proof}
We now focus on the gradient estimate \eqref{est}. We need to substitute Hypothesis \eqref{par_Hy1} with the following, stronger one:
\begin{equation}\label{par_Hy}\tag{\lowerRomannumeral{3}}
\sup_n\left(\frac{1}{\sigma_n}\sqrt[2\alpha]{\frac{2\alpha\lambda_n}{1-e^{-2\alpha\lambda_nt}}}e^{-\lambda_nt}\right)\le C_0\frac{1}{t^\gamma},\quad \text{for every } t>0,\, \text{for some }C_0>0,\,0<\gamma<1.
\end{equation}
In other terms, in Hypothesis \eqref{par_Hy1} we take $C_t\coloneqq C_0\,t^{-\gamma},\,t>0$, for some $C_0>0,\,\gamma\in\left(0,1\right)$. 
\begin{rem}\label{rem_condition}
	Observe that, for every $n\in\mathbb{N}$, the term 
	\[
	\frac{1}{\sigma_n}\sqrt[2\alpha]{\frac{2\alpha\lambda_n}{1-e^{-2\alpha\lambda_nt}}}e^{-\lambda_nt}\sim\frac{1}{\sigma_n}\frac{1}{t^{1/\left(2\alpha\right)}} \quad\text{as }t\downarrow0.
	\]
	Therefore, Hypothesis \eqref{par_Hy} should be verified only in the case $\alpha\in\left(\frac{1}{2},1\right)$ and for some $\gamma\in \left[\frac{1}{2\alpha},1\right)$. 
	
	It is also worth noticing that Hypothesis \eqref{par_Hy} is equivalent to the next condition:
	\begin{equation}\label{par_Hy_eq}\tag{\lowerRomannumeral{3}$'$}
	\sigma_n\ge C_1\,\lambda_n^{\frac{1}{2\alpha}-\gamma},\quad n\in\mathbb{N},
	\end{equation}
	for some $C_1>0$ and $\gamma\in\left[\frac{1}{2\alpha},1\right)$.
	A short argument proving the latter fact is shown in \cite[\emph{Hypothesis~(N)}]{PZ}.
\end{rem}
At this point the next result is immediate.
\begin{corollary}\label{corol}
	Consider $\alpha\in\left(\frac{1}{2},1\right)$ and assume $\sigma_n^2>0,\,n\in\mathbb{N}$, together with Hypotheses \eqref{st_con}\&\,\eqref{par_Hy}. 
	
	Then for every $\phi\in\mathcal{B}_b\left(H\right)$ the function $R_t\phi\in C_b^1\left(H\right),\,t>0,$ and the gradient estimate \eqref{est} holds, namely there exists a constant $C>0$ such that
	\[
		\sup_{x\in H}\norm{\nabla R_t\phi\left(x\right)}_H\le \frac{C}{t^{\gamma}}\norm{\phi}_\infty\quad \text{for every }t>0,
	\]
	where $\gamma\in\left[\frac{1}{2\alpha},1\right)$ is the one appearing in Hypothesis \eqref{par_Hy}. 
\end{corollary}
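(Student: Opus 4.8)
The plan is to deduce the statement directly from Theorem \ref{main}, since Hypothesis \eqref{par_Hy} is nothing but the specialization of Hypothesis \eqref{par_Hy1} obtained by choosing the function $C_t:=C_0\,t^{-\gamma}$. First I would verify that, under the assumptions of the corollary, all the hypotheses of Theorem \ref{main} are in force: we have $\sigma_n^2>0$ for every $n\in\mathbb{N}$ by assumption, Hypothesis \eqref{st_con} is assumed, and Hypothesis \eqref{par_Hy} immediately yields \eqref{par_Hy1} with the explicit, time-dependent bound $C_t=C_0\,t^{-\gamma}$.

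Next, a direct application of Theorem \ref{main} would give at once that $R_t\phi\in C_b^1\left(H\right)$ for every $\phi\in\mathcal{B}_b\left(H\right)$ and $t>0$, together with the gradient estimate \eqref{vabecedo}, namely $\sup_{x\in H}\norm{\nabla R_t\phi\left(x\right)}_H\le c_\alpha\,C_t\,\norm{\phi}_\infty$ for some $c_\alpha>0$. Substituting $C_t=C_0\,t^{-\gamma}$ and setting $C:=c_\alpha\,C_0$ then produces exactly the desired bound \eqref{est}, with the same exponent $\gamma$ appearing in Hypothesis \eqref{par_Hy}.

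Finally, I would recall that the admissible range $\alpha\in\left(\frac{1}{2},1\right)$ and $\gamma\in\left[\frac{1}{2\alpha},1\right)$ is inherited from Hypothesis \eqref{par_Hy} itself, as already discussed in Remark \ref{rem_condition}: the small-time asymptotics of the terms $\frac{1}{\sigma_n}\sqrt[2\alpha]{\frac{2\alpha\lambda_n}{1-e^{-2\alpha\lambda_nt}}}e^{-\lambda_nt}$ force $\gamma\ge\frac{1}{2\alpha}$, which is compatible with the requirement $\gamma<1$ precisely when $\alpha>\frac{1}{2}$. I do not anticipate any genuine obstacle here: the entire analytic content—the derivation formula \eqref{no_BEL}, the passage from $C_b\left(H\right)$ to $\mathcal{B}_b\left(H\right)$, and the uniform gradient bound—has already been established in Theorem \ref{main}, so the corollary merely rewrites that conclusion under the strengthened, time-explicit hypothesis, which is why the result is immediate.
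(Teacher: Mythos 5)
Your proposal is correct and matches the paper's intent exactly: the paper introduces Hypothesis \eqref{par_Hy} precisely as the choice $C_t=C_0\,t^{-\gamma}$ in Hypothesis \eqref{par_Hy1} and then states that the corollary is immediate from Theorem \ref{main}, which is the substitution you carry out. Nothing is missing.
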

\begin{example}\label{example2}
	We investigate Hypothesis \eqref{par_Hy}, in its equivalent formulation \eqref{par_Hy_eq} provided by Remark \ref{rem_condition}, in the same framework as in Example \ref{example1}. So we take $A=\Delta$ (hence $-\lambda_k=-\left(2\pi\right)^2\left|k\right|^2,\,k\in\mathbb{Z}_0^d$) and study two possible choices for $Q$.
	\begin{itemize}
		\item If $Q=\text{Id}$, then 
		\[
		1\ge \frac{1}{\left(2\pi\left|k\right|\right)^{2\left(\gamma-\frac{1}{2\alpha}\right)}},\quad k\in\mathbb{Z}_0^d
		\] 
		for every $\gamma\in\left[\frac{1}{2\alpha},1\right)$. Therefore, in dimension $d=1$ both conditions \eqref{st_con} and \eqref{par_Hy} are satisfied. In particular, motivated by the fact that $R_t$ is a regularization operator with $R_0= \text{Id}$, we are interested in the behavior of $\nabla R_t\phi$ around $0$, where $\phi\in \mathcal{B}_b\left(H\right)$. Therefore we choose $\gamma=\frac{1}{2\alpha}$ and Corollary \ref{corol} provides the next estimate:
		\[
			\sup_{x\in H}\norm{\nabla R_t\phi\left(x\right)}_H\le C\,\frac{1}{t^{2\alpha}}\norm{\phi}_\infty\quad \text{for every } t>0,
		\]
		for a positive constant $C$.
		\item If $Q=Q_\eta=\left(-\Delta\right)^{-\eta}$ for $\eta>0$, then $\sigma^{\left(\eta\right)}_k=\lambda_k^{-\eta/2},\,k\in\mathbb{Z}_0^d$, and \eqref{par_Hy_eq} holds true if and only if $\eta\le2\gamma-\frac{1}{\alpha}$. Since we can take any $\gamma\in\left[\frac{1}{2\alpha},1\right)$, the aforementioned condition holds as soon as $\eta<2-\frac{1}{\alpha}.$ Combining this result with \eqref{nontutte} obtained in Example \ref{example1}, we conclude that Hypotheses \eqref{st_con} and \eqref{par_Hy} simultaneously hold if and only if $$\eta\in\left(\max\left\{\frac{d-2}{2\alpha},0\right\}\!, \,2
		-\frac{1}{\alpha}\right).$$ 
		It then follows that there exist negative fractional powers of the Laplacian $Q_\eta=\left(-\Delta\right)^{-\eta}$ meeting the requirements of Corollary \ref{corol}  up to dimension $d=3$. Specifically, for $d=1,2$ there is a $Q_\eta$ with the searched properties for every $\alpha\in\left(\frac{1}{2},1\right)$, whereas in dimension $d=3$ we can find such a $Q_\eta$ only for $\alpha\in\left(\frac{3}{4},1\right)$.
	\end{itemize}
\end{example}
\section*{Acknowledgment}
	I thank Professor Franco Flandoli for useful discussions and valuable insight into the subject.

\end{document}